\documentclass[10pt]{article}
\usepackage{amsmath,amscd}
\usepackage{amssymb,latexsym,amsthm}
\usepackage{color}
\usepackage[spanish,english]{babel}
\usepackage{amssymb}
\usepackage{color}
\usepackage{amsmath,amsthm,amscd}
\usepackage[latin1]{inputenc}
\usepackage{lscape}
\usepackage{fancyhdr}
\usepackage{amsfonts}
\usepackage{pb-diagram}

\numberwithin{equation}{section}
\newtheorem{theorem}{Theorem}[section]

\newtheorem{definition}[theorem]{Definition}
\newtheorem{proposition}[theorem]{Proposition}
\newtheorem{lemma}[theorem]{Lemma}

\theoremstyle{definition}

\newtheorem{example}[theorem]{Example}

\newtheorem{remark}[theorem]{Remark}

\newcommand{\cG}{\mbox{${\cal G}$}}

\newcommand{\cO}{\mbox{${\cal O}$}}

\newcommand{\cU}{\mbox{${\cal U}$}}

\newcommand{\cW}{\mbox{${\cal W}$}}

\hoffset-0.3in \voffset-1.3cm \setlength{\oddsidemargin}{9mm}
\setlength{\textheight}{21.3cm}\setlength{\textwidth}{16cm}

\title{\textbf{Non-commutative algebraic geometry\\ of semi-graded rings}}
\author{Oswaldo Lezama\\
\texttt{jolezamas@unal.edu.co}
\\Edward Latorre
\\ Seminario de Álgebra Constructiva - SAC$^2$\\ Departamento de Matemáticas\\ Universidad Nacional de
Colombia, Sede Bogot\'a}
\date{}
\begin{document}
\maketitle
\begin{abstract}
\noindent In this paper we introduce the semi-graded rings, which extend graded rings and skew PBW
extensions. For this new type of non-commutative rings we will discuss some basic problems of
non-commutative algebraic geometry. In particular, we will prove some elementary properties of the
generalized Hilbert series, Hilbert polynomial and Gelfand-Kirillov dimension. We will extended the
notion of non-commutative projective scheme to the case of semi-graded rings and we generalize the
Serre-Artin-Zhang-Verevkin theorem. Some examples are included at the end of the paper.
\bigskip

\noindent \textit{Key words and phrases.} Non-commutative algebraic geometry, graded rings and
modules, Hilbert series and Hilbert polynomial, Gelfand-Kirillov dimension, non-commutative
schemes, skew $PBW$ extensions.

\bigskip

\noindent 2010 \textit{Mathematics Subject Classification.} Primary: 16S38. Secondary: 16W50,
16S80, 16S36.
\end{abstract}

\section{Introduction}

Finitely graded algebras and skew $PBW$ extensions cover many important classes of non-commutative
rings and algebras coming from quantum mechanics. For example, quantum polynomials are examples of
graded algebras, and universal enveloping algebras of finite-dimensional Lie algebras are examples
of skew $PBW$ (Poincaré-Birkhoff-Witt) extensions (see \cite{Artin}, \cite{Artin2}, \cite{Goetz},
\cite{Rogalski} and \cite{lezamareyes1}). There exists recent interest in developing the
non-commutative projective algebraic geometry for finitely graded algebras (see \cite{Ginzburg1},
\cite{Kanazawa} and \cite{Rogalski}). However, for non $\mathbb{N}$-graded algebras, in particular,
for many important classes of skew $PBW$ extensions, only few works have been realized (see
\cite{Gaddis}). In this paper we present an introduction to non-commutative algebraic geometry for
non $\mathbb{N}$-graded algebras and rings, defining a new class of rings: the \textit{semi-graded
rings}. As we will see, the semi-graded rings generalize the finitely graded algebras and the skew
PBW extensions. We will discuss the most basic problems on non-commutative algebraic geometry for
semi-graded rings. The problems to be discussed are around the following topics: generalized
Hilbert series and Hilbert polynomial, generalized Gelfand-Kirillov dimension, non-commutative
schemes associated to semi-graded rings, and we will extend the Serre-Artin-Zhang-Verevkin theorem
on projective schemes to semi-graded rings satisfying some natural restrictions.

In the present section we will recall the definition of finitely graded algebras and the
Serre-Artin-Zhang-Verevkin theorem (see \cite{Artin2}, \cite{Rogalski}, \cite{Verevkin},
\cite{Verevkin2}). We will also recall the definition of skew $PBW$ extensions and some important
examples of this type of non-commutative rings of polynomial type. In the second section we
introduce the semi-graded rings and modules and we will prove some elementary properties of them.
The third and fourth sections are dedicated to give a generalization of the Hilbert series, Hilbert
polynomial and the classical Gelfand-Kirillov dimension. These three notions are very important in
any approach to non-commutative algebraic geoemtry. The purpose of the fourth section is to
extended the notion of non-commutative projective scheme to the case of semi-graded rings. In the
last section we present a generalization of the Serre-Artin-Zhang-Verevkin theorem about
non-commutative schemes of finitely graded algebras to the case of semi-graded rings of special
type. This main result can be applied to study the non-commutative algebraic geometry of some
important particular examples of quantum algebras, probably not considered before in the
literature.

\begin{definition}
Let $K$ be a field. It is said that a $K$-algebra $A$ is finitely graded if the following
conditions hold:
\begin{itemize}
\item $A$ is $\mathbb{N}$-graded: $A=\bigoplus_{n\geq 0} A_n$.
\item $A$ is connected, i.e., $A_0=K$.
\item $A$ is finitely generated as $K$-algebra.
\end{itemize}
\end{definition}
The most remarkable examples of finitely graded algebras for which the non-commutative projective
algebraic geometry has been developed are the quantum plane, the Jordan plane, the Sklyanin algebra
and the quantum polynomial ring in several variables (see \cite{Rogalski}). Next we review the most
basic facts of the non-commutative algebraic geometry of finitely graded algebras following the
approach given by M. Artin and J. J. Zhang in \cite{Artin2}, by A. B. Verevkin in \cite{Verevkin},
\cite{Verevkin2}, and by Rogalski in \cite{Rogalski}.

Let $A$ be a finitely graded $K$-algebra and $M=\bigoplus_{n\in \mathbb{Z}} M_n$ be a
$\mathbb{Z}$-graded $A$-module which is finitely generated. Then,
\begin{enumerate}
\item[\rm (i)]For every $n\in \mathbb{Z}$, $\dim_{K}M_n<\infty$.
\item[\rm (ii)]The \textit{Hilbert series} of $M$ is defined by
\begin{center}
$h_M(t):=\sum_{n\in \mathbb{Z}}(\dim_K M_n)t^n$.
\end{center}
In particular,
\begin{center}
$h_A(t):=\sum_{n=0}^\infty(\dim_K A_n)t^n$.
\end{center}
\item[\rm (iii)]It says that $A$ has \textit{Hilbert polynomial} if there exists a polynomial $p_A(t)\in \mathbb{Q}[t]$ such that
for all $n$ sufficiently large, $p_A(n)=\dim_K A_n$. In this case $p_A(t)$ is called the Hilbert
polynomial of $A$. Thus,
\begin{center}
$\dim_K A_n=p_A(n)$, for all $n\ggg 0$.
\end{center}
\item[\rm (iv)]The \textit{Gelfand-Kirillov dimension} of $A$ is defined by
\begin{equation}\label{equ17.2.2}
{\rm GKdim}(A):=\sup_{V}\overline{\lim_{n\to \infty}}\log_n\dim_K V^n,
\end{equation}
where $V$ ranges over all frames of $A$ and $V^n:=\, _K\langle v_1\cdots v_n| v_i\in V\rangle$ (a
\textit{frame} of $A$ is a finite dimensional $K$-subspace of $A$ such that $1\in V$; since $A$ is
a $K$-algebra, then $K\hookrightarrow A$, and hence, $K$ is a frame of $A$ of dimension $1$).
\item[\rm (v)]\begin{equation*}
{\rm GKdim}(A)=\overline{\lim_{n\to \infty}}\log_n(\sum_{i=0}^n \dim_K A_i).
\end{equation*}
\item[\rm (vi)]If $p_A(t)$ exists, then
\begin{center}
${\rm GKdim}(A)=\deg(p_A(t))+1$.
\end{center}
\item[\rm (vii)]A famous Serre's theorem on commutative projective algebraic geometry states that the category of coherent sheaves over the projective $n$-space $\mathbb{P}^n$ is
equivalent to a category of noetherian graded modules over a graded commutative polynomial ring.
The study of this equivalence for non-commutative finitely graded noetherian algebras is known as
the non-commutative version of Serre's theorem and is due to Artin, Zhang and Verevkin. In the next
numerals we give the ingredients needed for the formulation of this theorem (see \cite{Artin2},
\cite{Verevkin}).
\item[\rm (viii)]Suppose that $A$ is left noetherian. Let ${\rm gr}-A$ be the abelian category of
finitely generated $\mathbb{Z}$-graded left $A$-modules. It is defined the abelian category ${\rm
qgr}-A$ in the following way: The objects are the same as the objects in ${\rm gr}-A$, and we let
$\pi:{\rm gr}-A\to {\rm qgr}-A$ be the identity map on objects. The morphisms in ${\rm qgr}-A$ are
defined in the following way:
\begin{center}
$Hom_{\rm {qgr}-A}(\pi(M),\pi(N)):=\underrightarrow{\lim}Hom_{{\rm gr}-A}(M_{\geq n}, N/T(N))$,
\end{center}
where the direct limit is taken over maps of abelian groups
\begin{center}
$Hom_{{\rm gr}-A}(M_{\geq n}, N/T(N))\to Hom_{{\rm gr}-A}(M_{\geq n+1}, N/T(N))$
\end{center}
induced by the inclusion homomorphism $M_{\geq n+1}\to M_{\geq n}$; $T(N)$ is the torsion submodule
of $N$ and an element $x\in N$ is torsion if $A_{\geq n}x=0$ for some $n\geq 0$. The pair $(\rm
{qgr}-A,\pi(A))$ is called the \textit{non-commutative projective scheme} associated to $A$, and
denoted by $\rm {qgr}-A$. Thus, ${\rm qgr}-A$ is a quotient category, ${\rm qgr}-A={\rm gr}-A/{\rm
tor}-A$.
\item[\rm (ix)]\textit{$($Serre's theorem$)$ Let $A$ be a commutative finitely graded $K$-algebra generated in degree $1$.
Then, there exists an equivalence of categories}
\begin{center}
${\rm qgr}-A\simeq {\rm coh}({\rm proj}(A))$.
\end{center}
\textit{In particular},
\begin{center}
${\rm qgr}-K[x_0,\dots,x_n]\simeq {\rm coh}(\mathbb{P}^n)$.
\end{center}
\item[\rm (x)]Suppose that $A$ is left noetherian; let $i\geq 0$; it is said that $A$ satisfies the $\chi_i$ condition if for every
finitely generated $\mathbb{Z}$-graded $A$-module $M$, $\dim_K(\underline{Ext}_A^j(K,M))<\infty$
for any $j\leq i$; the algebra $A$ satisfies the $\chi$ condition if it satisfies the $\chi_i$
condition for all $i\geq 0$.
\item[\rm (xi)]\textit{$($Artin-Zhang-Verevkin theorem$)$ If $A$ is left noetherian and satisfies $\chi_1$, then $\pi(A)$ is left noetherian and there exists an equivalence of categories
\begin{center}
${\rm qgr}-A\simeq {\rm qgr}-\Gamma(\pi(A))_{\geq 0}$,
\end{center}
where $\Gamma(\pi(A))_{\geq 0}:=\bigoplus_{d=0}^\infty Hom_{{\rm qgr}-A}(\pi(A),s^d(\pi(A)))$ and
$s$ is the autoequivalence of ${\rm qg}r-A$ defined by the shifts of degrees.}
\end{enumerate}

Now we recall the definition of skew $PBW$ extension defined firstly in \cite{LezamaGallego}; many
important algebras coming from mathematical physics are particular examples of skew $PBW$
extensions: $\cal{U}(\cal{G})$, where $\cal{G}$ is a finite dimensional Lie algebra, the algebra of
$q$-differential operators, the algebra of shift operators, the additive analogue of the Weyl
algebra, the multiplicative analogue of the Weyl algebra, the quantum algebra
$\mathcal{U}'(so(3,K))$, $3$-dimensional skew polynomial algebras, the dispin algebra, the
Woronowicz algebra, the $q$-Heisenberg algebra, are particular examples of skew $PBW$ extensions
(see \cite{lezamareyes1}).

\begin{definition}\label{gpbwextension}
Let $R$ and $A$ be rings. We say that $A$ is a \textit{skew $PBW$ extension of $R$} $($also called
a $\sigma-PBW$ extension of $R$$)$ if the following conditions hold:
\begin{enumerate}
\item[\rm (i)]$R\subseteq A$.
\item[\rm (ii)]There exist finitely many elements $x_1,\dots ,x_n\in A$ such $A$ is a left $R$-free module with basis
\begin{center}
${\rm Mon}(A):= \{x^{\alpha}=x_1^{\alpha_1}\cdots x_n^{\alpha_n}\mid \alpha=(\alpha_1,\dots
,\alpha_n)\in \mathbb{N}^n\}$, with $\mathbb{N}:=\{0,1,2,\dots\}$.
\end{center}
The set $Mon(A)$ is called the set of standard monomials of $A$.
\item[\rm (iii)]For every $1\leq i\leq n$ and $r\in R-\{0\}$ there exists $c_{i,r}\in R-\{0\}$ such that
\begin{equation}\label{sigmadefinicion1}
x_ir-c_{i,r}x_i\in R.
\end{equation}
\item[\rm (iv)]For every $1\leq i,j\leq n$ there exists $c_{i,j}\in R-\{0\}$ such that
\begin{equation}\label{sigmadefinicion2}
x_jx_i-c_{i,j}x_ix_j\in R+Rx_1+\cdots +Rx_n.
\end{equation}
Under these conditions we will write $A:=\sigma(R)\langle x_1,\dots ,x_n\rangle$.
\end{enumerate}
\end{definition}
Associated to a skew $PBW$ extension $A=\sigma(R)\langle x_1,\dots ,x_n\rangle$, there are $n$
injective endomorphisms $\sigma_1,\dots,\sigma_n$ of $R$ and $\sigma_i$-derivations, as the
following proposition shows.
\begin{proposition}\label{sigmadefinition}
Let $A$ be a skew $PBW$ extension of $R$. Then, for every $1\leq i\leq n$, there exist an injective
ring endomorphism $\sigma_i:R\rightarrow R$ and a $\sigma_i$-derivation $\delta_i:R\rightarrow R$
such that
\begin{center}
$x_ir=\sigma_i(r)x_i+\delta_i(r)$,
\end{center}
for each $r\in R$.
\end{proposition}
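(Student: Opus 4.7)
The plan is to extract $\sigma_i$ and $\delta_i$ directly from the defining relation \eqref{sigmadefinicion1} and then verify their ring-theoretic properties by exploiting the uniqueness of coefficients in the left $R$-basis ${\rm Mon}(A)$.

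First I would define the maps. For each $1\leq i\leq n$ and each $r\in R-\{0\}$, condition (iii) gives elements $c_{i,r}\in R-\{0\}$ and (implicitly) $d_{i,r}\in R$ such that $x_ir=c_{i,r}x_i+d_{i,r}$. Setting $\sigma_i(r):=c_{i,r}$, $\delta_i(r):=d_{i,r}$, and $\sigma_i(0):=0$, $\delta_i(0):=0$, well-definedness is immediate: since $\{1,x_1,\dots,x_n\}\subseteq{\rm Mon}(A)$ are part of a left $R$-basis of $A$, the representation $x_ir=\sigma_i(r)x_i+\delta_i(r)$ has unique coefficients in $R$.

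Next I would verify the algebraic properties by computing $x_i(r+s)$ and $x_i(rs)$ in two ways and invoking uniqueness. For additivity, expanding $x_i(r+s)=x_ir+x_is$ and matching coefficients of $x_i$ and $1$ yields $\sigma_i(r+s)=\sigma_i(r)+\sigma_i(s)$ and $\delta_i(r+s)=\delta_i(r)+\delta_i(s)$. For multiplicativity, the computation
\[
x_i(rs)=(x_ir)s=(\sigma_i(r)x_i+\delta_i(r))s=\sigma_i(r)\sigma_i(s)x_i+\sigma_i(r)\delta_i(s)+\delta_i(r)s
\]
matched against $x_i(rs)=\sigma_i(rs)x_i+\delta_i(rs)$ gives simultaneously $\sigma_i(rs)=\sigma_i(r)\sigma_i(s)$ and the Leibniz-type rule $\delta_i(rs)=\sigma_i(r)\delta_i(s)+\delta_i(r)s$, which is exactly the $\sigma_i$-derivation identity. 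Taking $r=s=1$ in $x_i\cdot 1=x_i$ forces $\sigma_i(1)=1$ and $\delta_i(1)=0$, so $\sigma_i$ is a ring endomorphism.

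Finally, injectivity of $\sigma_i$ is the cleanest step and essentially a tautology given condition (iii): if $r\neq 0$, then the $c_{i,r}$ produced by (iii) lies in $R-\{0\}$, so $\sigma_i(r)=c_{i,r}\neq 0$. Hence $\ker\sigma_i=0$. There is no real obstacle in the argument; the only subtlety is keeping track that the uniqueness of the decomposition $x_ir=\sigma_i(r)x_i+\delta_i(r)$ rests squarely on ${\rm Mon}(A)$ being a \emph{left} $R$-basis (condition (ii)), which is what prevents coefficients from being rearranged and what makes the entire definition coherent.
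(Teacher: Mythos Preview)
Your argument is correct and is precisely the standard direct verification: extract $\sigma_i$ and $\delta_i$ from condition (iii), use the left $R$-freeness in (ii) to get uniqueness of coefficients, and read off the endomorphism, derivation, and injectivity properties. The paper itself does not supply a proof here but simply cites \cite{LezamaGallego}, Proposition~3; your write-up is exactly the kind of argument that reference contains, so there is nothing to compare.
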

\begin{proof}
See \cite{LezamaGallego}, Proposition 3.
\end{proof}

A particular case of skew $PBW$ extension is when all derivations $\delta_i$ are zero. Another
interesting case is when all $\sigma_i$ are bijective and the constants $c_{ij}$ are invertible. We
recall the following definition (cf. \cite{LezamaGallego}).
\begin{definition}\label{sigmapbwderivationtype}
Let $A$ be a skew $PBW$ extension.
\begin{enumerate}
\item[\rm (a)]
$A$ is quasi-commutative if the conditions {\rm(}iii{\rm)} and {\rm(}iv{\rm)} in Definition
\ref{gpbwextension} are replaced by
\begin{enumerate}
\item[\rm (iii')]For every $1\leq i\leq n$ and $r\in R-\{0\}$ there exists $c_{i,r}\in R-\{0\}$ such that
\begin{equation}
x_ir=c_{i,r}x_i.
\end{equation}
\item[\rm (iv')]For every $1\leq i,j\leq n$ there exists $c_{i,j}\in R-\{0\}$ such that
\begin{equation}
x_jx_i=c_{i,j}x_ix_j.
\end{equation}
\end{enumerate}
\item[\rm (b)]$A$ is bijective if $\sigma_i$ is bijective for
every $1\leq i\leq n$ and $c_{i,j}$ is invertible for any $1\leq i<j\leq n$.
\end{enumerate}
\end{definition}
Observe that quasi-commutative skew $PBW$ extensions are $\mathbb{N}$-graded rings but arbitrary
skew $PBW$ extensions are semi-graded rings as we will see below. Actually, the main motivation for
constructing the non-commutative algebraic geometry of semi-graded rings is due to arbitrary skew
$PBW$ extensions.

Many properties of skew $PBW$ extensions have been studied in previous works (see
\cite{Lezama-OreGoldie}, \cite{lezamareyes1}). For example, the global, Krull and Goldie dimensions
of bijective skew \textit{PBW} extensions were estimated in \cite{lezamareyes1}. The next theorem
establishes two classical ring theoretic results for skew $PBW$ extensions.

\begin{theorem}\label{1.3.4}
Let $A$ be a bijective skew $PBW$ extension of a ring $R$.
\begin{enumerate}
\item[\rm (i)]$($Hilbert Basis Theorem$)$ If $R$ is a left $($right$)$ Noetherian ring then $A$ is also left $($right$)$ Noetherian.
\item[\rm (ii)]$($Ore's theorem$)$ If $R$ is a left Ore
domain $R$. Then $A$ is also a left Ore domain.
\end{enumerate}
\end{theorem}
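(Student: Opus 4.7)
The natural plan is to filter $A$ by the total degree in the generators $x_1,\dots,x_n$ and study the associated graded ring. Define $F_m A$ to be the left $R$-submodule of $A$ spanned by the standard monomials $x^{\alpha}$ with $|\alpha|\le m$. Because the relations (\ref{sigmadefinicion1}) and (\ref{sigmadefinicion2}) have their right-hand sides in degrees $\le 1$, $\{F_m A\}_{m\ge 0}$ is an exhaustive ring filtration, and the associated graded ring $G(A):=\bigoplus_m F_m A/F_{m-1}A$ is a quasi-commutative bijective skew $PBW$ extension of $R$ in the sense of Definition \ref{sigmapbwderivationtype}.

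For part (i), the key step is that a quasi-commutative bijective skew $PBW$ extension is isomorphic to an iterated Ore extension $R[z_1;\theta_1][z_2;\theta_2]\cdots [z_n;\theta_n]$ by automorphisms; invertibility of the constants $c_{ij}$ together with bijectivity of the $\sigma_i$ is precisely what is needed to realize the quasi-commutation relations as successive skew polynomial extensions. Applying the classical Hilbert basis theorem for skew polynomial rings by automorphisms $n$ times shows that $G(A)$ is left (respectively right) Noetherian. A standard filtered-to-graded lifting argument then transfers Noetherianity to $A$: any ascending chain of left ideals $I_1\subseteq I_2\subseteq\cdots$ in $A$ produces an ascending chain of graded left ideals $G(I_1)\subseteq G(I_2)\subseteq\cdots$ in $G(A)$, which stabilizes and forces the original chain to stabilize as well.

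For part (ii), by part (i) the ring $A$ is already left Noetherian, so it suffices to prove that $A$ is a domain and then invoke Goldie's theorem (a left Noetherian domain is prime with no nonzero nilpotent ideals and hence left Ore). To see that $A$ is a domain, fix the degree-lexicographic order on $\mathrm{Mon}(A)$. For nonzero $f=\sum_{\alpha}r_\alpha x^\alpha$ and $g=\sum_\beta s_\beta x^\beta$ with leading monomials $x^{\alpha_0}$ and $x^{\beta_0}$, repeated use of (\ref{sigmadefinicion1}) gives $x^{\alpha_0} s_{\beta_0}=\sigma^{\alpha_0}(s_{\beta_0})x^{\alpha_0}+(\text{lower terms})$, and repeated use of (\ref{sigmadefinicion2}) gives $x^{\alpha_0}x^{\beta_0}=c_{\alpha_0,\beta_0}\,x^{\alpha_0+\beta_0}+(\text{lower terms})$ with $c_{\alpha_0,\beta_0}$ a product of invertible $c_{ij}$'s twisted by the $\sigma_i$. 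Hence the coefficient of $x^{\alpha_0+\beta_0}$ in $fg$ is $r_{\alpha_0}\,\sigma^{\alpha_0}(s_{\beta_0})\,c_{\alpha_0,\beta_0}$, which is nonzero because $R$ is a domain, the $\sigma_i$ are injective, and $c_{\alpha_0,\beta_0}$ is a unit.

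The main obstacle throughout is controlling the lower-order perturbations produced by the derivations $\delta_i$ and by the ``remainder'' terms on the right of (\ref{sigmadefinicion2}). The role of the filtration is precisely to push these corrections into strictly lower degrees so that the leading-term analysis in (ii) and the iterated Ore extension description in (i) become clean. The bijectivity hypothesis is essential rather than cosmetic: without invertibility of the $c_{ij}$ the leading-coefficient $c_{\alpha_0,\beta_0}$ in part (ii) might fail to be a unit, and more seriously the iterated Ore extension description needed for part (i) would genuinely break down.
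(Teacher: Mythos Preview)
The paper does not actually prove this theorem; it is stated in the introductory section as a known result imported from \cite{Lezama-OreGoldie} and \cite{lezamareyes1}. So there is no ``paper's own proof'' to compare against, and your proposal must stand on its own.

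Your argument for part (i) is correct and is exactly the standard one in the cited literature: pass to the associated graded ring, identify it with an iterated Ore extension $R[z_1;\theta_1]\cdots[z_n;\theta_n]$ by automorphisms (here bijectivity is used), apply the skew Hilbert basis theorem $n$ times, and lift Noetherianity back through the filtration.

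Part (ii), however, has a genuine gap. You write ``by part (i) the ring $A$ is already left Noetherian, so it suffices to prove that $A$ is a domain and then invoke Goldie's theorem.'' But part (i) has the hypothesis that $R$ is left Noetherian, whereas in (ii) you are only told that $R$ is a left Ore domain. A left Ore domain need not be left Noetherian (e.g.\ any commutative non-Noetherian domain), so you cannot invoke (i), and Goldie's shortcut is unavailable.

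Your leading-term computation showing that $A$ is a domain is fine and should be kept. What is missing is a direct argument for the left Ore condition that does not pass through Noetherianity. The natural fix stays within your filtered framework: the associated graded ring $G(A)$ is an iterated skew polynomial extension of $R$ by automorphisms, and it is classical that if $S$ is a left Ore domain and $\theta$ is an automorphism then $S[z;\theta]$ is again a left Ore domain; iterating, $G(A)$ is a left Ore domain. One then lifts the Ore condition from $G(A)$ to $A$ via the filtration: given nonzero $a,b\in A$, the common-left-multiple problem for their leading symbols in $G(A)$ has a solution, and an induction on degree (using that $G(A)$ is a domain, so degrees add) produces a common left multiple in $A$. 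This filtered-to-graded transfer of the Ore property is standard (see for instance \cite{McConnell}) and is the route taken in \cite{Lezama-OreGoldie}.
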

We conclude this introductory section fixing some notation: If not otherwise noted, all modules are
left modules; $B$ will denote a non-commutative ring; $K$ will be a field; $A:=\sigma(R)\langle
x_1,\dots ,x_n\rangle$ will represent a skew $PBW$ extension.

\section{Semi-graded rings and modules}

\noindent In this section we introduce the semi-graded rings and modules, we prove some elementary
properties of them, and we will show that graded rings, finitely graded algebras and skew $PBW$
extensions are particular cases of this new type of non-commutative rings.

\begin{definition}
Let $B$ be a ring. We say that $B$ is semi-graded $(SG)$ if there exists a collection
$\{B_n\}_{n\geq 0}$ of subgroups $B_n$ of the additive group $B^+$ such that the following
conditions hold:
\begin{enumerate}
\item[\rm (i)]$B=\bigoplus_{n\geq 0}B_n$.
\item[\rm (ii)]For every $m,n\geq 0$, $B_mB_n\subseteq B_0\oplus \cdots \oplus B_{m+n}$.
\item[\rm (iii)]$1\in B_0$.
\end{enumerate}
The collection $\{B_n\}_{n\geq 0}$ is called a semi-graduation of $B$ and we say that the elements
of $B_n$ are homogeneous of degree $n$. Let $B$ and $C$ be semi-graded rings and let $f: B\to C$ be
a ring homomorphism, we say that $f$ is homogeneous if $f(B_n)\subseteq C_{n}$ for every $n\geq 0$.
\end{definition}

\begin{definition}
Let $B$ be a $SG$ ring and let $M$ be a $B$-module. We say that $M$ is a $\mathbb{Z}$-semi-graded,
or simply semi-graded, if there exists a collection $\{M_n\}_{n\in \mathbb{Z}}$ of subgroups $M_n$
of the additive group $M^+$ such that the following conditions hold:
\begin{enumerate}
\item[\rm (i)]$M=\bigoplus_{n\in \mathbb{Z}} M_n$.
\item[\rm (ii)]For every $m\geq 0$ and $n\in \mathbb{Z}$, $B_mM_n\subseteq \bigoplus_{k\leq m+n}M_k$.
\end{enumerate}
We say that $M$ is positively semi-graded, also called $\mathbb{N}$-semi-graded, if $M_n=0$ for
every $n<0$. Let $f: M\to N$ be an homomorphism of $B$-modules, where $M$ and $N$ are semi-graded
$B$-modules; we say that $f$ is homogeneous if $f(M_n)\subseteq N_n$ for every $n\in \mathbb{Z}$.
\end{definition}

As for the case of rings, the collection $\{M_n\}_{n\in \mathbb{Z}}$ is called a
\textit{semi-graduation} of $M$ and we say that the elements of $M_n$ are \textit{homogeneous} of
degree $n$.

Let $B$ be a semi-graded ring and let $M$ be a semi-graded $B$-module, let $N$ be a submodule of
$M$, let $N_n:=N\cap M_n$, $n\in \mathbb{Z}$; observe that the sum $\sum_{n}N_n$ is direct. This
induces the following definition.

\begin{definition}
Let $B$ be a $SG$ ring and $M$ be a semi-graded module over $B$. Let $N$ be a submodule of $M$, we
say that $N$ is a semi-graded submodule of $M$ if $N=\bigoplus_{n\in \mathbb{Z}}N_n$.
\end{definition}

Note that if $N$ is semi-graded, then $B_mN_n\subseteq \bigoplus_{k\leq m+n}N_k$, for every $n\in
\mathbb{Z}$ and $m\geq 0$: In fact, let $b\in B_m$ and $z\in N_n$, then $bz\in B_mM_n\subseteq
\bigoplus_{k\leq m+n}M_k$ and $bz=z_1+\cdots+z_l$, with $z_i\in N_{n_i}\subseteq M_{n_i}$, but
since the sum is direct, then $n_i\leq m+n$ for every $1\le i\leq l$.

Finally, we introduce an important class of semi-graded rings that includes finitely graded
algebras and skew $PBW$ extensions.

\begin{definition}\label{definition17.5.4}
Let $B$ be a ring. We say that $B$ is finitely semi-graded $(FSG)$ if $B$ satisfies the following
conditions:
\begin{enumerate}
\item[\rm (i)]$B$ is $SG$.
\item[\rm (ii)]There exists finitely many elements $x_1,\dots,x_n\in B$ such that the
subring generated by $B_0$ and $x_1,\dots,x_n$ coincides with $B$.
\item[\rm (iii)]For every $n\geq 0$, $B_n$ is a free $B_0$-module of finite dimension.
\end{enumerate}
Moreover, if $M$ is a $B$-module, we say that $M$ is finitely semi-graded if $M$ is semi-graded,
finitely generated, and for every $n\in \mathbb{Z}$, $M_n$ is a free $B_0$-module of finite
dimension.
\end{definition}

\begin{remark}
Observe if $B$ is $FSG$, then $B_0B_p=B_p$ for every $p\geq 0$, and if $M$ is finitely semi-graded,
then $B_0M_n=M_n$ for all $n\in \mathbb{Z}$.
\end{remark}

From the definitions above we get the following conclusions.

\begin{proposition}\label{proposition17.5.5}
Let $B=\bigoplus_{n\geq 0}B_n$ be a $SG$ ring and $I$ be a proper two-sided ideal of $B$
semi-graded as left ideal. Then,
\begin{enumerate}
\item[\rm (i)]$B_0$ is a subring of $B$. Moreover, for any $n\geq 0$, $B_0\oplus \cdots \oplus B_{n}$ is a $B_0-B_0$-bimodule, as well as $B$.
\item[\rm (ii)]$B$ has a standard $\mathbb{N}$-filtration given by
\begin{equation}\label{equ17.5.1}
F_n(B):=B_0\oplus \cdots \oplus B_{n}.
\end{equation}
\item[\rm (iii)]The associated graded ring $Gr(B)$ satisfies
\begin{center}
$Gr(B)_n\cong B_n$, for every $n\geq 0$ $($isomorphism of abelian groups$)$.
\end{center}
\item[\rm (iv)]Let $M=\bigoplus_{n\in \mathbb{Z}}M_n$ be a semi-graded $B$-module and $N$ a submodule of $M$. The following conditions are equivalent:
\begin{enumerate}
\item[\rm (a)]$N$ is semi-graded.
\item[\rm (b)]For every $z\in N$, the homogeneous components of $z$ are in $N$.
\item[\rm (c)]$M/N$ is semi-graded with semi-graduation given by
\begin{center}
$(M/N)_n:=(M_n+N)/N$, $n\in \mathbb{Z}$.
\end{center}
\end{enumerate}
\item[\rm (v)]$B/I$ is $SG$.
\item[\rm (vi)]If $B$ is $FSG$ and $I\cap B_n\subseteq IB_n$ for
every $n$, then $B/I$ is $FSG$.
\end{enumerate}
\end{proposition}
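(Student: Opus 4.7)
For (i), the plan is to apply the product rule $B_mB_n\subseteq B_0\oplus\cdots\oplus B_{m+n}$ with $m=n=0$ to get $B_0B_0\subseteq B_0$; combined with $1\in B_0$, this makes $B_0$ a subring. For the bimodule claim, the same rule with $n=0$ (and with $m=0$) gives $B_0B_k\subseteq F_k(B)$ and $B_kB_0\subseteq F_k(B)$, so $F_n(B)=B_0\oplus\cdots\oplus B_n$ is closed under left and right multiplication by $B_0$, and extending linearly shows the same for $B$. For (ii), I check the four filtration axioms: $1\in F_0(B)$, $F_n(B)\subseteq F_{n+1}(B)$, $\bigcup_n F_n(B)=B$ are immediate from the direct sum decomposition; $F_m(B)F_n(B)\subseteq F_{m+n}(B)$ follows by bilinearity from $B_iB_j\subseteq F_{i+j}(B)\subseteq F_{m+n}(B)$ whenever $i\le m$, $j\le n$. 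For (iii), $Gr(B)_n=F_n(B)/F_{n-1}(B)=(B_0\oplus\cdots\oplus B_n)/(B_0\oplus\cdots\oplus B_{n-1})\cong B_n$ by the isomorphism theorem for direct sums.

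For (iv), the plan is to prove the implications (a)$\Rightarrow$(b)$\Rightarrow$(c)$\Rightarrow$(a). For (a)$\Rightarrow$(b), if $z\in N=\bigoplus_n N_n$ then $z=\sum z_n$ with $z_n\in N_n\subseteq M_n\cap N$, and by uniqueness of the homogeneous decomposition in $M$ these $z_n$ are exactly the homogeneous components of $z$. For (b)$\Rightarrow$(c), the sum $M/N=\sum_n (M_n+N)/N$ is clear from $M=\sum M_n$; to see it is direct, suppose $\sum_n (z_n+N)=0$ with $z_n\in M_n$, so $\sum z_n\in N$; by (b) each $z_n\in N$, hence $z_n+N=0$. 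The module axiom $B_m(M/N)_n\subseteq \bigoplus_{k\le m+n}(M/N)_k$ is inherited from the one for $M$. For (c)$\Rightarrow$(a), given any $z\in N$ write $z=\sum z_n$ with $z_n\in M_n$; then $0=z+N=\sum(z_n+N)$ in the direct sum $M/N=\bigoplus_n(M_n+N)/N$, forcing $z_n\in N$ for each $n$, so $z=\sum z_n\in\bigoplus(N\cap M_n)$, which gives $N=\bigoplus_n N_n$.

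For (v), I apply (iv) to $M=B$, $N=I$ viewed as a left submodule: the hypothesis says $I$ is semi-graded as a left ideal, so $(c)$ gives $B/I=\bigoplus_n (B_n+I)/I$. It remains to verify the ring-theoretic axioms of SG: $1+I\in (B/I)_0$ since $1\in B_0$, and $(B_m+I)(B_n+I)/I\subseteq (B_mB_n+I)/I\subseteq (F_{m+n}(B)+I)/I=\bigoplus_{k\le m+n}(B_k+I)/I$, using $B_mB_n\subseteq F_{m+n}(B)$ from the SG structure of $B$ together with the direct sum just established. For (vi), assuming $B$ is FSG with generators $x_1,\dots,x_n$ over $B_0$, the cosets $\overline{x}_1,\dots,\overline{x}_n$ together with $B_0/I_0=(B_0+I)/I=(B/I)_0$ generate $B/I$ as a ring, giving the finite generation condition; the homogeneous component $(B/I)_n\cong B_n/(I\cap B_n)$ is, by the hypothesis $I\cap B_n\subseteq I\cdot B_n$ (which together with the reverse inclusion $I_0B_n\subseteq I\cap B_n$ identifies $I\cap B_n$ with the extension of $I_0$ to $B_n$), isomorphic to $B_n\otimes_{B_0}(B_0/I_0)$, and so inherits a free $B_0/I_0$-basis of finite size from any $B_0$-basis of $B_n$.

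The main obstacle is the correct interpretation and use of the hypothesis in (vi): one must verify that $I\cap B_n\subseteq IB_n$ together with the SG structure forces $(B/I)_n$ to be a \emph{free} $(B/I)_0$-module, not merely a finitely generated one, so that the tensor-product argument produces the required basis; the remaining implications of the proposition are bookkeeping built on the direct-sum decompositions established in (iv).
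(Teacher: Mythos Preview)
Your treatment of (i)--(v) is correct and matches the paper's approach; the paper simply declares (i) and (ii) obvious, gives the same quotient computation for (iii), and organizes (iv) as (a)$\Leftrightarrow$(b) together with (b)$\Rightarrow$(c) and (c)$\Rightarrow$(b), which is equivalent to your cycle (a)$\Rightarrow$(b)$\Rightarrow$(c)$\Rightarrow$(a). Part (v) is likewise handled the same way in both.

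For (vi) there is a genuine gap. Your tensor-product reduction hinges on the identification $I\cap B_n=I_0B_n$, which you try to extract from the two inclusions $I_0B_n\subseteq I\cap B_n$ and $I\cap B_n\subseteq IB_n$. But the second inclusion involves the \emph{full} ideal $I$, not $I_0$: elements of $IB_n$ need not lie in $B_n$ at all, and in general $I_0B_n\subsetneq IB_n$, so the chain $I_0B_n\subseteq I\cap B_n\subseteq IB_n$ does not collapse. Consequently the isomorphism $B_n/(I\cap B_n)\cong B_n\otimes_{B_0}(B_0/I_0)$ is not established, and freeness of $(B/I)_n$ over $(B/I)_0$ does not follow. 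The paper avoids this by arguing directly with a fixed $B_0$-basis $z_1,\dots,z_l$ of $B_n$: a relation $\overline{c_1}\,\overline{z_1}+\cdots+\overline{c_l}\,\overline{z_l}=\overline{0}$ with $c_i\in B_0$ gives $c_1z_1+\cdots+c_lz_l\in I\cap B_n\subseteq IB_n$; since $B_n=B_0z_1+\cdots+B_0z_l$ and $I$ is two-sided (so $IB_0\subseteq I$), this element can be rewritten as $d_1z_1+\cdots+d_lz_l$ with each $d_i\in I$, and the paper then compares the two expressions to conclude $c_i=d_i\in I$, hence $\overline{c_i}=\overline{0}$. The essential point is that the hypothesis produces coefficients in $I$, not in $I_0$, and one must work with that; your shortcut replaces $I$ by $I_0$ without justification.
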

\begin{proof}
(i) and (ii) are obvious. For (iii) observe that $Gr(B)_n=F_n(B)/F_{n-1}(B)\cong B_n$ for every
$n\geq 0$ (isomorphism of abelian groups); in addition, note how acts the product: let
$z:=\overline{b_0+\cdots+b_m}\in Gr(B)_m$, $z':=\overline{c_0+\cdots+c_n}\in Gr(B)_n$, then
\begin{center}
$zz'=\overline{b_m}\overline{c_n}=\overline{d_0+\cdots+d_{m+n}}=\overline{d_{m+n}}\in
Gr(B)_{m+n}\cong B_{n+m}$.
\end{center}
(iv) (a)$\Leftrightarrow$(b) is obvious.

(b)$\Rightarrow$(c): Let $\overline{M}_n:=(M/N)_n:=(M_n+N)/N$, $n\in \mathbb{Z}$, then
$\overline{M}:=M/N=\bigoplus_{n\in \mathbb{Z}}\overline{M}_n$. In fact, let $z\in M$, then
$\overline{z}\in \overline{M}$ can be written as
$\overline{z}=\overline{z_1+\cdots+z_l}=\overline{z_1}+\cdots+\overline{z_l}$, with $z_k\in
M_{n_k}$, $1\leq k\leq l$, thus, $\overline{z}\in \sum_{n\in \mathbb{Z}}\overline{M}_n$, and hence,
$\overline{M}=\sum_{n\in \mathbb{Z}}\overline{M}_n$. This sum is direct since if
$\overline{z_1}+\cdots+\overline{z_l}=\overline{0}$, then $z_1+\cdots+z_l\in N$, so by (b) $z_k\in
N$, i.e., $\overline{z_k}=\overline{0}$ for every $1\leq k\leq l$. Now, let $b_m\in B_m$ and
$\overline{z_n}\in \overline{M}_n$, then
$b_m\overline{z_n}=\overline{b_mz_n}=\overline{d_1+\cdots+d_{p}}$, with $d_i\in M_{n_i}$ and
$n_i\leq m+n$, so $\overline{b_m}\overline{z_n}=\overline{d_1}+\cdots+\overline{d_{p}}\in
\bigoplus_{k\leq m+n}\overline{M}_k$. We have proved that $\overline{M}$ is semi-graded.

(c)$\Rightarrow$(b): Let $z=z_1+\cdots+z_l\in N$, with $z_i\in M_{n_i}$, $1\leq i\leq l$, then
$\overline{0}=\overline{z_1}+\cdots+\overline{z_l}\in \overline{M}=\bigoplus_{n\in
\mathbb{Z}}\overline{M}_n$, therefore, $\overline{z_i}=\overline{0}$, and hence $z_i\in N$ for
every $i$.

(v) The proof is similar to (b)$\Rightarrow$(c) in (iv).

(vi) By (v), $\overline{B}$ is $SG$. Let $x_1,\dots,x_n\in B$ such that the subring generated by
$B_0$ and $x_1,\dots,x_n$ coincides with $B$, then it is clear that the subring of $\overline{B}$
generated by $\overline{B}_0$ and $\overline{x_1},\dots,\overline{x_n}$ coincides with
$\overline{B}$. Let $n\geq 0$ and $\{z_1,\dots,z_l\}$ be a basis of the free left $B_0$-module
$B_n$, then $\{\overline{z_1},\dots,\overline{z_l}\}$ is a basis of $\overline{B}_n$: in fact, let
$\overline{z}\in \overline{B}_n$ with $z\in B_n$, then $z=c_1z_1+\cdots+c_lz_l$, with $c_i\in B_0$,
$1\leq i\leq l$, and hence,
$\overline{z}=\overline{c_1}\,\overline{z_1}+\cdots+\overline{c_l}\,\overline{z_l}$, i.e.,
$\overline{B}_n$ is generated by $\overline{z_1},\dots,\overline{z_l}$ over $\overline{B}_0$; now,
if $\overline{c_1}\,\overline{z_1}+\cdots+\overline{c_l}\,\overline{z_l}=\overline{0}$, with
$c_i\in B_0$, then $c_1z_1+\cdots+c_lz_l\in I\cap B_n\subseteq IB_n$ and hence we can write
\begin{center}
$c_1z_1+\cdots+c_lz_l=d_1z_1+\cdots+d_lz_l$, with $d_i\in I$, $1\leq i\leq l$.
\end{center}
From this we get that $c_i=d_i$, so $\overline{c_i}=\overline{0}$ for every $i$.
\end{proof}
Note that the condition imposed to $I$ in (vi) is of type Artin-Rees (see \cite{McConnell}).

\begin{proposition}\label{proposition16.5.7}
{\rm (i)} Any $\mathbb{N}$-graded ring is $SG$.

{\rm (ii)} Let $K$ be a field. Any finitely graded $K$-algebra is a $FSG$ ring.

{\rm (iii)} Any skew $PBW$ extension is a $FSG$ ring.
\end{proposition}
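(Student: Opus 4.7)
For part (i), if $B=\bigoplus_{n\geq 0}B_n$ is $\mathbb{N}$-graded, then $B_mB_n\subseteq B_{m+n}\subseteq B_0\oplus\cdots\oplus B_{m+n}$, and $1\in B_0$ since the unit is homogeneous of degree $0$ in any $\mathbb{N}$-graded ring; so all three conditions of the definition of $SG$ hold. For part (ii), I would first invoke (i) to get the $SG$ property. Finite generation as a $K$-algebra gives generators $x_1,\dots,x_n$ which, after replacing each by its homogeneous components and discarding any lying in $A_0=K$, we may assume are homogeneous of strictly positive degree; these together with $A_0$ generate $A$ as a subring. Finally, each $A_n$ is a $K$-vector space spanned by the finite set of words of total degree $n$ in the $x_i$, hence finite dimensional, hence a free $A_0=K$-module of finite rank.

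For (iii), let $A=\sigma(R)\langle x_1,\dots,x_n\rangle$. The natural candidate for a semi-graduation is
\[
A_p:=\bigoplus_{|\alpha|=p}R\,x^\alpha,\qquad |\alpha|=\alpha_1+\cdots+\alpha_n,
\]
so that $A_0=R$ and $A=\bigoplus_{p\geq 0}A_p$ as left $R$-modules by Definition \ref{gpbwextension}(ii). Condition (iii) of the $SG$ definition is immediate since $1\in R=A_0$. The only content is the multiplicative condition $A_mA_n\subseteq A_0\oplus\cdots\oplus A_{m+n}$, which is the main obstacle.

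To verify this, it suffices to show that $(rx^\alpha)(sx^\beta)\in \bigoplus_{k\leq |\alpha|+|\beta|}A_k$ for $r,s\in R$. The plan is a two-step reduction. First, using Proposition \ref{sigmadefinition}, one proves by induction on $|\alpha|$ that for every $s\in R$,
\[
x^\alpha s=\sigma^\alpha(s)\,x^\alpha+\sum_{|\gamma|<|\alpha|}s_\gamma\, x^\gamma
\]
for suitable $s_\gamma\in R$; the base case $|\alpha|=1$ is exactly $x_is=\sigma_i(s)x_i+\delta_i(s)$, and the inductive step writes $x^\alpha=x_i x^{\alpha'}$ and applies the induction hypothesis to each summand produced. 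Second, one proves by induction on $|\alpha|+|\beta|$ that
\[
x^\alpha x^\beta=c_{\alpha,\beta}\,x^{\alpha+\beta}+\sum_{|\gamma|<|\alpha|+|\beta|}r_\gamma\,x^\gamma,
\]
with $c_{\alpha,\beta}\in R$ and $r_\gamma\in R$; the critical input is Definition \ref{gpbwextension}(iv), which handles the commutation $x_jx_i=c_{i,j}x_ix_j+(\text{terms in }R+Rx_1+\cdots+Rx_n)$ needed to reorder the variables in $x^\alpha x^\beta$ into the standard monomial form. Combining these two expansions shows that $(rx^\alpha)(sx^\beta)$ lies in $\bigoplus_{k\leq |\alpha|+|\beta|}A_k$, proving $A$ is $SG$.

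Finally, $A$ is $FSG$: by construction $A_0=R$ and $A$ is generated as a subring by $A_0$ together with $x_1,\dots,x_n$, verifying (ii) of Definition \ref{definition17.5.4}; and $A_p$ is a free left $A_0=R$-module with finite basis $\{x^\alpha:|\alpha|=p\}$ by Definition \ref{gpbwextension}(ii), verifying (iii). The main obstacle throughout is the second induction above, establishing what is essentially the PBW-type expansion of a product of standard monomials; everything else is bookkeeping.
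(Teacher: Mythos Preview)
Your proposal is correct and follows the same approach as the paper: the paper dispatches (i) and (ii) with the phrase ``follow directly from the definitions'' and for (iii) defines exactly the same semi-graduation $A_k=\,_R\langle x^\alpha:|\alpha|=k\rangle$, simply recording that $A_k$ is free of rank $\binom{n+k-1}{k}$ without explicitly checking the multiplicative condition $A_mA_n\subseteq A_0\oplus\cdots\oplus A_{m+n}$. Your two inductions (moving a scalar past a monomial via Proposition~\ref{sigmadefinition}, then reordering a product of two monomials via condition (iv)) are the standard way to fill in that gap, so you have supplied the details the paper leaves implicit rather than taken a different route.
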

\begin{proof}
(i) and (ii) follow directly from the definitions.

(iii) Let $A=\sigma(R)\langle x_1,\dots,x_n\rangle$ be a skew $PBW$ extension, then
$A=\bigoplus_{k\geq 0} A_k$, where
\begin{center}
$A_k:=_R\langle x^\alpha\in Mon(A)|\deg(x^\alpha)=:\alpha_1+\cdots+\alpha_n=k\rangle$.
\end{center}
Thus, $A_k$ is a free left $R$-module with
\begin{equation}
\dim_R A_k=\binom{n+k-1}{k}=\binom{n+k-1}{n-1}.
\end{equation}
We are assuming that $R$ is an $\mathcal{IBN}$ ring (\textit{Invariant basis number}), and hence,
$A$ also satisfies this condition, see \cite{Gallego4}.
\end{proof}

\begin{remark}
(i) Note that the class of $SG$ rings includes properly the class of $\mathbb{N}$-graded rings: In
fact, the enveloping algebra of finite-dimensional Lie algebra (see Example \ref{example6.14}
below) proves this statement.

(ii) The example in (i) proves also that the class of $FSG$ rings includes properly the class of
finitely graded algebras.

(iii) Finally, the class of $FSG$ rings includes properly the class of skew $PBW$ extensions: For
this consider the Artin-Schelter regular algebra of global dimension $3$ defined by the following
relations:
\begin{center}
$yx=xy+z^2$, $zy=yz+x^2$, $zx=xz+y^2$.
\end{center}
Observe that this algebra is a particular case of a Sklyanin algebra wich in general are defined by
the following relations:
\begin{center}
$ayx+bxy+cz^2=0$, $azy+byz+cx^2=0$, $axz+bzx+cy^2=0$, $a,b,c\in K$.
\end{center}
\end{remark}

\section{Generalized Hilbert series and Hilbert polynomial}

In this section we introduce the notion of generalized Hilbert series and generalized Hilbert
polynomial for semi-graded rings. As in the classical case of finitely graded algebras over fields,
these notions depends on the semi-graduation, in particular, they depend on the ring $B_0$. We will
compute these tools for skew $PBW$ extensions.

\begin{definition}
Let $B=\sum_{n\geq 0}\oplus B_n$ be a $FSG$ ring and $M=\bigoplus_{n\in \mathbb{Z}}M_n$ be a
finitely semi-graded $B$-module. The generalized Hilbert series of $M$ is defined by
\begin{center}
$Gh_M(t):=\sum_{n\in \mathbb{Z}}(\dim_{B_0} M_n)t^n$.
\end{center}
In particular,
\begin{center}
$Gh_B(t):=\sum_{n=0}^\infty(\dim_{B_0} B_n)t^n$.
\end{center}
We say that $B$ has a generalized Hilbert polynomial if there exists a polynomial $Gp_B(t)\in
\mathbb{Q}[t]$ such that
\begin{center}
$\dim_{B_0} B_n=Gp_B(n)$, for all $n\ggg 0$.
\end{center}
In this case $Gp_B(t)$ is called the generalized Hilbert polynomial of $B$.
\end{definition}

\begin{remark}\label{remark17.5.11}
(i) Note that if $K$ is a field and $B$ is a finitely graded $K$-algebra, then the generalized
Hilbert series coincides with the habitual Hilbert series, i.e., $Gh_{B}(t)=h_B(t)$; the same is
true for the generalized Hilbert polynomial.

(ii) Observe that if a semi-graded ring $B$ has another \-se\-mi-\-gra\-dua\-tion
$B=\bigoplus_{n\geq 0} C_n$, then its generalized Hilbert series and its generalized Hilbert
polynomial can change, i.e., the notions of generalized Hilbert series and generalized Hilbert
polynomial depend on the semi-graduation, in particular on $B_0$. For example, consider the
habitual real polynomial ring in two variables $B:=\mathbb{R}[x,y]$, then
$Gh_{B}(t)=\frac{1}{(1-t)^2}$ and $Gp_{B}(t)=t+1$; but if we view this ring as
$B=(\mathbb{R}[x])[y]$ then $C_0=\mathbb{R}[x]$, its generalized Hilbert polynomial series is
$\frac{1}{1-t}$ and its generalized Hilbert polynomial is $1$.
\end{remark}

For skew $PBW$ extensions the generalized Hilbert series and the generalized Hilbert polynomial can
be computed explicitly.

\begin{theorem}\label{17.5.10}
Let $A=\sigma(R)\langle x_1,\dots,x_n\rangle$ be an arbitrary skew $PBW$ extension. Then,
\begin{enumerate}
\item[\rm (i)]
\begin{equation}\label{equ17.2.3}
Gh_A(t)=\frac{1}{(1-t)^n}.
\end{equation}
\item[\rm (ii)]
\begin{equation}\label{equation17.5.4}
Gp_A(t)=\frac{1}{(n-1)!}[t^{n-1}-s_1t^{n-2}+\cdots+(-1)^rs_{r}t^{n-r-1}+\cdots+(n-1)!],
\end{equation}
where $s_1,\dots,s_k,\dots,s_{n-1}$ are the elementary symmetric polynomials in the variables
$1-n,2-n,\dots,(n-1)-n$.
\end{enumerate}
\end{theorem}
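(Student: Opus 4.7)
My starting point is Proposition~\ref{proposition16.5.7}(iii), where it is shown that for $A=\sigma(R)\langle x_1,\dots,x_n\rangle$ the $k$-th semi-graded component $A_k$ is the free left $R$-module on those standard monomials $x^{\alpha}$ with $|\alpha|=\alpha_1+\cdots+\alpha_n=k$, so that
\[
\dim_R A_k \;=\; \binom{n+k-1}{k}\;=\;\binom{n+k-1}{n-1}.
\]
Since $A_0=R$, we have $\dim_{A_0}A_k=\dim_R A_k$, so the entire computation reduces to handling this binomial coefficient.

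\textbf{Part (i).} The plan is to apply the classical generating-function identity
\[
\sum_{k=0}^{\infty}\binom{n+k-1}{n-1}\,t^k\;=\;\frac{1}{(1-t)^n},
\]
which follows from differentiating $(1-t)^{-1}=\sum t^k$ repeatedly, or equivalently from the binomial series $(1-t)^{-n}=\sum_{k\ge 0}(-1)^k\binom{-n}{k}t^k$ together with the identity $(-1)^k\binom{-n}{k}=\binom{n+k-1}{k}$. Plugging $\dim_{A_0}A_k=\binom{n+k-1}{n-1}$ into the definition of $Gh_A(t)$ then yields the closed form in~(\ref{equ17.2.3}).

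\textbf{Part (ii).} The key observation is the factorization
\[
\binom{n+k-1}{n-1}\;=\;\frac{(k+1)(k+2)\cdots(k+n-1)}{(n-1)!},
\]
which is already a polynomial in $k$ of degree $n-1$, hence it agrees with $Gp_A$ for \emph{every} $k\ge 0$ (not merely $k\ggg 0$). So I would define
\[
Gp_A(t)\;:=\;\frac{1}{(n-1)!}\,(t+1)(t+2)\cdots(t+n-1)
\]
and then just rewrite this product in expanded form. The roots of the polynomial in the bracket are $-1,-2,\dots,-(n-1)$, i.e.\ the numbers $1-n,2-n,\dots,(n-1)-n$. Using Vieta's formulas for a monic polynomial in terms of its roots $r_1,\dots,r_{n-1}$,
\[
\prod_{j=1}^{n-1}(t-r_j)\;=\;t^{n-1}-s_1t^{n-2}+s_2t^{n-3}-\cdots+(-1)^{n-1}s_{n-1},
\]
with $s_k$ the $k$-th elementary symmetric polynomial in the $r_j$'s, gives exactly the stated formula~(\ref{equation17.5.4}). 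Finally, to check that the constant term is $(n-1)!$, I would just evaluate at $t=0$: $(0+1)(0+2)\cdots(0+n-1)=(n-1)!$, which is consistent with $(-1)^{n-1}s_{n-1}=(-1)^{n-1}\prod_{j=1}^{n-1}(j-n)=(-1)^{n-1}(-1)^{n-1}(n-1)!=(n-1)!$.

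\textbf{Where the work lies.} There is really no serious obstacle: once one has Proposition~\ref{proposition16.5.7}(iii), both parts are formal. The only point requiring care is the bookkeeping of signs in (ii), namely that the variables listed in the statement are $j-n$ (not $n-j$), which produces the alternating-sign pattern $t^{n-1}-s_1t^{n-2}+\cdots$ through Vieta, and that the final constant term simplifies to $(n-1)!$ rather than $\pm(n-1)!$.
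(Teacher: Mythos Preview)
Your proof is correct and follows essentially the same approach as the paper: both parts reduce to the count $\dim_R A_k=\binom{n+k-1}{n-1}$ from Proposition~\ref{proposition16.5.7}(iii), with (i) obtained from the standard generating-function identity and (ii) by writing $\binom{n+k-1}{n-1}=\frac{1}{(n-1)!}(k+1)(k+2)\cdots(k+n-1)$ and expanding via Vieta's formulas in the roots $1-n,\dots,(n-1)-n$. Your additional explanation of the generating-function identity and the verification of the constant term are minor elaborations on the same argument.
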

\begin{proof}
(i) We have
\begin{equation*}
Gh_A(t)=\sum_{k=0}^\infty(\dim_R A_k)t^k=\sum_{k=0}^\infty \binom{n+k-1}{k}t^k=\frac{1}{(1-t)^n}.
\end{equation*}
(ii) Note that {\tiny
\begin{align*}
\dim_R A_k=\binom{n+k-1}{k}& =\frac{(n+k-1)!}{k!(n-1)!}\\
& =\frac{(k+n-1)(k+n-2)(k+n-3)\cdots (k+n-(n-1))(k+n-n)!}{k!(n-1)!}\\
& =\frac{(k+n-1)(k+n-2)(k+n-3)\cdots (k+n-(n-1))}{(n-1)!}\\
& =\frac{1}{(n-1)!}[k^{n-1}-s_1k^{n-2}\cdots+(-1)^rs_{r}k^{n-r-1}+\cdots+(-1)^{n-1}s_{n-1}k^{n-n}]\\
& =\frac{1}{(n-1)!}[k^{n-1}-s_1k^{n-2}\cdots+(-1)^rs_{r}k^{n-r-1}+\cdots+(n-1)!],
\end{align*}}
where $s_1,\dots,s_k,\dots,s_{n-1}$ are the elementary symmetric polynomials in the variables
$1-n,2-n,\dots,(n-1)-n$. Thus, we found a polynomial $Gp_A(t)\in \mathbb{Q}[t]$ of degree $n-1$
such that
\begin{equation}\label{equ17.5.5}
\dim_R A_k=Gp_A(k)\ \text{for all $k\geq 0$}.
\end{equation}
\end{proof}

From Theorem \ref{17.5.10}, and considering the numeral (ii) in Remark \ref{remark17.5.11}, we can
compute the generalized Hilbert series and the generalized Hilbert polynomial for all examples of
skew $PBW$ extensions described in \cite{lezamareyes1}. In addition, for the skew quantum
polynomials, we can interpreted some of them as quasi-commutative bijective skew $PBW$ extensions
of the $r$-multiparameter quantum torus. Thus, we have the following tables:

\newpage

\begin{center}
\begin{table}[htb]
\centering \tiny{
\begin{tabular}{|l|l|l|}\hline 
\textbf{Ring} & \textbf{$Gh_A(t)$} & \textbf{$Gp_A(t)$}
\\ \hline \hline Habitual polynomial ring $R[x_1,\dotsc,x_n]$ & $\frac{1}{(1-t)^n}$ &  $\frac{1}{(n-1)!}[t^{n-1}+\cdots+1]$\\
\cline{1-3} Ore extension of bijective type $R[x_1;\sigma_1 ,\delta_1]\cdots [x_n;\sigma_n
,\delta_n]$
 & $\frac{1}{(1-t)^n}$ &  $\frac{1}{(n-1)!}[t^{n-1}+\cdots+1]$\\
\cline{1-3} Weyl algebra $A_n(K)$ & $\frac{1}{(1-t)^n}$ &  $\frac{1}{(n-1)!}[t^{n-1}+\cdots+1]$\\
\cline{1-3}
Extended Weyl algebra $B_n(K)$ & $\frac{1}{(1-t)^n}$ & $\frac{1}{(n-1)!}[t^{n-1}+\cdots+1]$\\
\cline{1-3} Enveloping algebra of a Lie algebra $\mathcal{G}$ of dimension $n$,
$\cU(\mathcal{G})$ & $\frac{1}{(1-t)^n}$ &  $\frac{1}{(n-1)!}[t^{n-1}+\cdots+1]$\\
\cline{1-3} Tensor product $R\otimes_K \cU(\cG)$ & $\frac{1}{(1-t)^n}$ &  $\frac{1}{(n-1)!}[t^{n-1}+\cdots+1]$\\
\cline{1-3} Crossed product $R*\cU(\cG)$ & $\frac{1}{(1-t)^n}$ &  $\frac{1}{(n-1)!}[t^{n-1}+\cdots+1]$\\
\cline{1-3} Algebra of q-differential operators $D_{q,h}[x,y]$
& $\frac{1}{1-t}$ & $1$ \\
\cline{1-3} Algebra of shift operators $S_h$ & $\frac{1}{1-t}$ & $1$ \\
\cline{1-3}
Mixed algebra $D_h$ & $\frac{1}{(1-t)^2}$ &  $t+1$\\
\cline{1-3} Discrete linear systems
$K[t_1,\dotsc,t_n][x_1,\sigma_1]\dotsb[x_n;\sigma_n]$ & $\frac{1}{(1-t)^n}$ & $\frac{1}{(n-1)!}[t^{n-1}+\cdots+1]$\\
\cline{1-3} Linear partial shift operators
$K[t_1,\dotsc,t_n][E_1,\dotsc,E_n]$ & $\frac{1}{(1-t)^n}$ &  $\frac{1}{(n-1)!}[t^{n-1}+\cdots+1]$\\
\cline{1-3} Linear partial shift operators
$K(t_1,\dotsc,t_n)[E_1,\dotsc,E_n]$ & $\frac{1}{(1-t)^n}$ &  $\frac{1}{(n-1)!}[t^{n-1}+\cdots+1]$\\
\cline{1-3} L. P. Differential operators
$K[t_1,\dotsc,t_n][\partial_1,\dotsc,\partial_n]$ & $\frac{1}{(1-t)^n}$ &  $\frac{1}{(n-1)!}[t^{n-1}+\cdots+1]$\\
\cline{1-3} L. P. Differential operators
$K(t_1,\dotsc,t_n)[\partial_1,\dotsc,\partial_n]$ & $\frac{1}{(1-t)^n}$ &  $\frac{1}{(n-1)!}[t^{n-1}+\cdots+1]$\\
\cline{1-3} L. P. Difference operators
$K[t_1,\dotsc,t_n][\Delta_1,\dotsc,\Delta_n]$ & $\frac{1}{(1-t)^n}$ &  $\frac{1}{(n-1)!}[t^{n-1}+\cdots+1]$\\
\cline{1-3} L. P. Difference operators
$K(t_1,\dotsc,t_n)[\Delta_1,\dotsc,\Delta_n]$ & $\frac{1}{(1-t)^n}$ &  $\frac{1}{(n-1)!}[t^{n-1}+\cdots+1]$\\
\cline{1-3} L. P. $q$-dilation operators
$K[t_1,\dotsc,t_n][H_1^{(q)},\dotsc,H_m^{(q)}]$ & $\frac{1}{(1-t)^m}$ &  $\frac{1}{(m-1)!}[t^{m-1}+\cdots+1]$\\
\cline{1-3} L. P. $q$-dilation operators
$K(t_1,\dotsc,t_n)[H_1^{(q)},\dotsc,H_m^{(q)}]$ & $\frac{1}{(1-t)^m}$ &  $\frac{1}{(m-1)!}[t^{m-1}+\cdots+1]$\\
\cline{1-3} L. P. $q$-differential operators
$K[t_1,\dotsc,t_n][D_1^{(q)},\dotsc,D_m^{(q)}]$ & $\frac{1}{(1-t)^m}$ &  $\frac{1}{(m-1)!}[t^{m-1}+\cdots+1]$\\
\cline{1-3} L. P. $q$-differential operators
$K(t_1,\dotsc,t_n)[D_1^{(q)},\dotsc,D_m^{(q)}]$ & $\frac{1}{(1-t)^m}$ &  $\frac{1}{(m-1)!}[t^{m-1}+\cdots+1]$\\
\cline{1-3} Diffusion algebras & $\frac{1}{(1-t)^n}$ &  $\frac{1}{(n-1)!}[t^{n-1}+\cdots+1]$\\
\cline{1-3}
Additive analogue of the Weyl algebra $A_n(q_1,\dotsc,q_n)$ & $\frac{1}{(1-t)^n}$ & $\frac{1}{(n-1)!}[t^{n-1}+\cdots+1]$\\
\cline{1-3} Multiplicative analogue of the Weyl algebra
$\cO_n(\lambda_{ji})$ & $\frac{1}{(1-t)^{n-1}}$ &  $\frac{1}{(n-2)!}[t^{n-2}+\cdots+1]$\\
\cline{1-3} Quantum algebra $\cU'(\mathfrak{so}(3,K))$ & $\frac{1}{(1-t)^3}$ &  $\frac{1}{2}[t^{2}+3t+1]$\\
\cline{1-3} 3-dimensional skew polynomial algebras & $\frac{1}{(1-t)^3}$ &  $\frac{1}{2}[t^{2}+3t+1]$\\
\cline{1-3} Dispin algebra $\cU(osp(1,2))$ & $\frac{1}{(1-t)^3}$ &  $\frac{1}{2}[t^{2}+3t+1]$\\
\cline{1-3} Woronowicz algebra $\cW_{\nu}(\mathfrak{sl}(2,K))$ & $\frac{1}{(1-t)^3}$ &  $\frac{1}{2}[t^{2}+3t+1]$\\
\cline{1-3}
Complex algebra $V_q(\mathfrak{sl}_3(\mathbb{C}))$ & $\frac{1}{(1-t)^6}$ &  $\frac{1}{120}[t^{5}+15t^4+85t^3+217t^2+274t+120]$\\
\cline{1-3} Algebra \textbf{U} & $\frac{1}{(1-t)^{2n}}$ &  $\frac{1}{(2n-1)!}[t^{2n-1}+\cdots+1]$\\
\cline{1-3} Manin algebra $\cO_q(M_2(K))$ & $\frac{1}{(1-t)^3}$ &  $\frac{1}{2}[t^{2}+3t+1]$\\
\cline{1-3} Coordinate algebra of the quantum group
$SL_q(2)$ & $\frac{1}{(1-t)^3}$ &  $\frac{1}{2}[t^{2}+3t+1]$\\
\cline{1-3} $q$-Heisenberg algebra \textbf{H}$_n(q)$ & $\frac{1}{(1-t)^{2n}}$ &  $\frac{1}{(2n-1)!}[t^{2n-1}+\cdots+1]$\\
\cline{1-3} Quantum enveloping algebra of
$\mathfrak{sl}(2,K)$, $\cU_q(\mathfrak{sl}(2,K))$ & $\frac{1}{(1-t)^2}$ &  $t+1$\\
\cline{1-3} Hayashi algebra $W_q(J)$ & $\frac{1}{(1-t)^{2n}}$ & $\frac{1}{(2n-1)!}[t^{2n-1}+\cdots+1]$\\
\cline{1-3} Differential operators on a quantum space $S_{\textbf{q}}$,
$D_{\textbf{q}}(S_{\textbf{q}})$ & $\frac{1}{(1-t)^n}$ &  $\frac{1}{(n-1)!}[t^{n-1}+\cdots+1]$\\
\cline{1-3}
Witten's Deformation of $\cU(\mathfrak{sl}(2,K)$ & $\frac{1}{1-t}$ & 1\\
\cline{1-3} Quantum Weyl algebra of Maltsiniotis
$A_n^{\textbf{q},\lambda}$ & $\frac{1}{(1-t)^2}$ & $t+1$\\
\cline{1-3} Quantum Weyl algebra $A_n(q,p_{i,j})$ & $\frac{1}{(1-t)^2}$ & $t+1$\\
\cline{1-3} Multiparameter Weyl algebra $A_n^{Q,\Gamma}(K)$ & $\frac{1}{(1-t)^2}$ & $t+1$\\
\cline{1-3} Quantum
symplectic space $\cO_q(\mathfrak{sp}(K^{2n}))$ & $\frac{1}{(1-t)^2}$ &  $t+1$\\
\cline{1-3} Quadratic algebras in 3 variables & $\frac{1}{1-t}$ &  1\\
\cline{1-3}
\end{tabular}}
\caption{Hilbert series and Hilbert polynomial for some examples of bijective skew $PBW$
extensions.}
\end{table}
\end{center}

\newpage

\begin{center}
\begin{table}[htb]
\centering \tiny{
\begin{tabular}{|l|l|l|}\hline
\textbf{Ring} & \textbf{$Gh_A(t)$} & \textbf{$Gp_A(t)$}
\\ \hline
\hline
\cline{1-3} $n$-Multiparametric skew quantum space $R_{\textbf{q},\sigma}[x_1,\dotsc,x_n]$ & $\frac{1}{(1-t)^n}$ &  $\frac{1}{(n-1)!}[t^{n-1}+\cdots+1]$\\
\cline{1-3} $n$-Multiparametric quantum space $R_{\textbf{q}}[x_1,\dotsc,x_n]$ & $\frac{1}{(1-t)^n}$ &  $\frac{1}{(n-1)!}[t^{n-1}+\cdots+1]$\\
\cline{1-3} $n$-Multiparametric skew quantum space $K_{\textbf{q},\sigma}[x_1,\dotsc,x_n]$ & $\frac{1}{(1-t)^n}$ &  $\frac{1}{(n-1)!}[t^{n-1}+\cdots+1]$\\
\cline{1-3} $n$-Multiparametric quantum space $K_{\textbf{q}}[x_1,\dotsc,x_n]$ & $\frac{1}{(1-t)^n}$ &  $\frac{1}{(n-1)!}[t^{n-1}+\cdots+1]$\\
\cline{1-3} Ring of skew quantum polynomials
$R_{\textbf{q},\sigma}[x_1^{\pm 1},\dotsc,x_r^{\pm 1},x_{r+1},\dotsc,x_n]$ & $\frac{1}{(1-t)^{n-r}}$ &  $\frac{1}{(n-r-1)!}[t^{n-r-1}+\cdots+1]$\\
\cline{1-3} Ring of quantum polynomials $R_{\textbf{q}}[x_1^{\pm 1},\dotsc,x_r^{\pm
1},x_{r+1},\dotsc,x_n]$ & $\frac{1}{(1-t)^{n-r}}$ &  $\frac{1}{(n-r-1)!}[t^{n-r-1}+\cdots+1]$\\
\cline{1-3} Algebra of skew quantum polynomials $K_{\textbf{q},\sigma}[x_1^{\pm
1},\dotsc,x_r^{\pm 1},x_{r+1},\dotsc,x_n]$ & $\frac{1}{(1-t)^{n-r}}$ &  $\frac{1}{(n-r-1)!}[t^{n-r-1}+\cdots+1]$\\
\cline{1-3} Algebra of quantum polynomials
$\cO_{\textbf{q}}=K_{\textbf{q}}[x_1^{\pm 1},\dotsc,x_r^{\pm 1},x_{r+1},\dotsc,x_n]$ & $\frac{1}{(1-t)^{n-r}}$ &  $\frac{1}{(n-r-1)!}[t^{n-r-1}+\cdots+1]$\\
\cline{1-3}
\end{tabular}}
\caption{Hilbert series and Hilbert polynomials of some skew quantum polynomials.}
\end{table}
\end{center}

\section{Generalized Gelfand-Kirillov dimension}

With respect to the Gelfand-Kirillov dimension, the classical definition over fields is not good
since, in general, for a ring $R$ a finitely generated $R$-module is not free. Whence, we have to
replace the classical dimension of free modules with other invariant. Next we will show that for
our purposes the Goldie dimension works properly, assuming that $R$ is a left noetherian domain. A
similar problem was considered in \cite{Beaulieu} for algebras over commutative noetherian domains
replacing the vector space dimension with the reduced rank.

The following two remarks induce our definition.

\begin{enumerate}

\item[\rm (i)]If $R$ is a left noetherian domain, then $R$ is a left Ore domain and hence ${\rm udim}(_RR)=1$.
From this we get the following conclusion: let $V$ be a free $R$-module of finite dimension, i.e.,
$\dim_R V=k$, then ${\rm udim}(V)=k$: in fact, $V\cong R^k$, and from this we obtain ${\rm
udim}(V)={\rm udim}(_RR\oplus \cdots\oplus _RR)={\rm udim}(_RR)+\cdots +{\rm udim}(_RR)=k$ (see
\cite{McConnell}).

\item[\rm (ii)]Let $R$ be a left noetherian domain and $A=\sigma(R)\langle x_1,\dots,x_n\rangle$ be a skew $PBW$ extension of $R$,
then (\ref{equ17.2.3}) takes the following form:
\begin{equation*}
Gh_A(t)=\sum_{k=0}^\infty({\rm udim}A_k)t^k=\sum_{k=0}^\infty(\dim_RA_k)t^k=\sum_{k=0}^\infty
\binom{n+k-1}{k}t^k=\frac{1}{(1-t)^n}.
\end{equation*}
\end{enumerate}

\begin{definition}
Let $B$ be a $FSG$ ring such that $B_0$ is a left noetherian domain. The generalized
Gelfand-Kirillov dimension of $B$ is defined by
\begin{center}
${\rm GGKdim}(B):=\sup_{V}\overline{\lim_{k\to \infty}}\log_k{\rm udim}V^k$,
\end{center}
where $V$ ranges over all frames of $B$ and $V^k:=\, _{B_0}\langle v_1\cdots v_k| v_i\in V\rangle$
$($a frame of $B$ is a finite dimensional $B_0$-free submodule of $B$ such that $1\in V$$)$.
\end{definition}

\begin{remark}
(i) Note that $B$ has at least one frame: $B_0$ is a frame of dimension $1$. We say that $V$ is a
\textit{generating frame} of $B$ if the subring of $B$ generating by $V$ and $B_0$ is $B$. For
example, if $R$ is a left noetherian domain and $A=\sigma(R)\langle x_1,\dots,x_n\rangle$ is a skew
$PBW$ extension of $R$, then $V:=_{R}\langle 1,x_1,\dots,x_n\rangle$ is a generating frame of $A$.

(ii) In a similar way as was observed in Remark \ref{remark17.5.11}, the notion of generalized
Gelfand-Kirillov dimension of a finitely semi-graded ring $B$ depends on the semi-graduation, in
particular, depends on $B_0$. Note that this type of consideration was made in \cite{Beaulieu} for
an alternative notion of the Gelfand-Kirillov dimension using the reduced rank.

(iii) If $B$ is a finitely graded $K$-algebra, then the classical Gelfand-Kirillov dimension of $B$
coincides with the just defined above notion, i.e., ${\rm GGKdim}(B)={\rm GKdim}(B)$.
\end{remark}

\begin{proposition}
Let $B$ be a $FSG$ ring such that $B_0$ is a left noetherian domain. Let $V$ be a generating frame
of $B$, then
\begin{equation}\label{equ17.2.4}
{\rm GGKdim}(B)=\overline{\lim_{k\to \infty}}\log_k({\rm udim} V^k).
\end{equation}
Moreover, this equality does not depend on the generating frame $V$.
\end{proposition}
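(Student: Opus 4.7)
The plan is to sandwich ${\rm GGKdim}(B)$ between two expressions involving the generating frame $V$. Since $V$ is itself a frame, the inequality $\overline{\lim_{k\to \infty}}\log_k({\rm udim}\, V^k)\leq {\rm GGKdim}(B)$ is immediate. For the reverse, I will show that for \emph{any} frame $U$ of $B$ one has $\overline{\lim_{k\to \infty}}\log_k({\rm udim}\, U^k)\leq \overline{\lim_{k\to \infty}}\log_k({\rm udim}\, V^k)$; taking the supremum over all frames $U$ then gives the equality. The final sentence about independence follows by applying the main inequality symmetrically to two generating frames $V$ and $W$. The crucial observation that drives everything is that every frame $W$ automatically contains $B_0$: since $1\in W$ and $W$ is a left $B_0$-submodule, $B_0=B_0\cdot 1\subseteq W$.

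From this observation I extract two multiplicative estimates. First, for any frame $W$,
$$W^a\cdot W^b\subseteq W^{a+b+1}\quad \text{and hence}\quad (W^c)^k\subseteq W^{(c+1)k-1}\subseteq W^{(c+1)k}.$$
Indeed, a generator of $W^a\cdot W^b$ has the form $b\,w_1\cdots w_a\cdot b'\,w_1'\cdots w_b'$ with $b,b'\in B_0$ and $w_i,w_j'\in W$, and the interstitial coefficient $b'\in B_0\subseteq W$ is absorbed as an extra $W$-factor. Second, for the generating frame $V$ and any frame $U$ one has $U\subseteq V^c$ for some $c\geq 0$: the subring of $B$ generated by $B_0\cup V$ equals $B$ by hypothesis, but since $B_0\subseteq V$ this subring coincides with $\bigcup_{\ell\geq 0}V^\ell$ (using $V^\ell\subseteq V^{\ell+1}$, again from $1\in V$). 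Each of the finitely many $B_0$-basis elements of $U$ therefore lies in some $V^{L_i}$, and one takes $c:=\max_iL_i$. Combining the two estimates yields $U^k\subseteq(V^c)^k\subseteq V^{(c+1)k}$.

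Now uniform dimension is monotone and finite on left $B_0$-submodules of the free $B_0$-module $F_N(B)=B_0\oplus\cdots\oplus B_N$, which is of finite rank by the $FSG$ condition and the standard filtration of Proposition \ref{proposition17.5.5}(ii), because $B_0$ is a left noetherian domain, hence a left Ore domain with ${\rm udim}(_{B_0}B_0)=1$. Since $V^{(c+1)k}\subseteq F_{N(k)}(B)$ for suitable $N(k)$, we conclude ${\rm udim}(U^k)\leq{\rm udim}(V^{(c+1)k})$. A routine subsequence manipulation,
$$\log_k({\rm udim}\, V^{(c+1)k})=\log_{(c+1)k}({\rm udim}\, V^{(c+1)k})\cdot\frac{\ln((c+1)k)}{\ln k},$$
with the second factor tending to $1$, shows that the $\overline{\lim}$ along the subsequence $m=(c+1)k$ is bounded above by the full $\overline{\lim}$ over all $m$, yielding the desired inequality. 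The step I expect to be the main obstacle is the estimate $W^a\cdot W^b\subseteq W^{a+b+1}$, which is trivial in the classical Gelfand-Kirillov setting because the field is central; here the non-commutativity of $B_0$ with higher graded pieces forces the trick $B_0\subseteq W$, and the resulting $+1$ in the exponent is harmless asymptotically but is the technical heart of the argument.
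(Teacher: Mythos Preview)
Your proof is correct and follows the same strategy as the paper's: bound an arbitrary frame $U$ inside some power $V^c$ of the generating frame, then compare growth rates via a change-of-base in the logarithm. Your extra care with the estimate $W^aW^b\subseteq W^{a+b+1}$ (absorbing the interstitial $B_0$-coefficient as an element of $W$ when $B_0$ is not assumed central) is in fact a point the paper glosses over---it simply asserts $W^k\subseteq V^{km}$ from $W\subseteq V^m$ without comment---so your version is the more careful of the two, though the discrepancy between $V^{mk}$ and $V^{(m+1)k}$ is of course harmless in the $\overline{\lim}$.
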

\begin{proof}
It is clear that $\overline{\lim_{k\to \infty}}\log_k({\rm udim}V^k)\leq {\rm GGKdim}(B)$. Let $W$
be any frame of $B$; since $\dim_{B_0} W<\infty$, then there exists $m$ such that $W\subseteq V^m$,
and hence for every $k$ we have $W^k\subseteq V^{km}$, but observe that $V^{km}$ is a finitely
generated left $B_0$-module, and since $B_0$ is left noetherian, then $V^{km}$ is a left noetherian
$B_0$-module, so ${\rm udim}V^{km}<\infty$. From this, ${\rm udim}W^k\leq {\rm udim}V^{km}$.
Therefore, $\log_k ({\rm udim}W^k)\leq \log_k({\rm udim}V^{km})=(1+\log_k m)\log_{km}({\rm
udim}V^{km})$. Since $\overline{\lim_{k\to \infty}}(1+\log_k m)=1$, we get that
$\overline{\lim_{k\to \infty}}\log_k({\rm udim}W^k)\leq \overline{\lim_{k\to \infty}}\log_{km}({\rm
udim}V^{km})$. But observe that $\overline{\lim_{k\to \infty}}\log_{km}({\rm udim}V^{km}) \leq
\overline{\lim_{k\to \infty}}\log_{k}({\rm udim}V^{k})$, whence
\[
{\rm GGKdim}(B)=\sup_{W}\overline{\lim_{k\to \infty}}\log_k({\rm udim}W^k)\leq \overline{\lim_{k\to
\infty}}\log_k({\rm udim}V^k).
\]
The proof of the second statement is completely similar.
\end{proof}

Next we present the main result of the present subsection.

\begin{theorem}
Let $R$ be a left noetherian domain and $A=\sigma(R)\langle x_1,\dots,x_n\rangle$ be a skew $PBW$
extension of $R$. Then,
\begin{equation*}
{\rm GGKdim}(A)=\overline{\lim_{k\to \infty}}\log_k(\sum_{i=0}^k \dim_R A_i)=1+\deg(Gp_A(t))=n.
\end{equation*}
\end{theorem}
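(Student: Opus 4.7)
The plan is to reduce all three expressions to the same explicit polynomial-growth limit by exhibiting a particularly clean generating frame. First I would take $V := {}_R\langle 1,x_1,\dots,x_n\rangle$, which is a free left $R$-module of dimension $n+1$, contains $1$, and together with $R=B_0$ generates $A$ as a ring (so $V$ is a generating frame of $A$ in the sense of the previous remark). By the preceding proposition, it suffices to compute ${\rm GGKdim}(A)$ as $\overline{\lim_{k\to\infty}}\log_k{\rm udim}\,V^k$ using this specific $V$.

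The crux of the argument is the identification
\[
V^k \;=\; \bigoplus_{i=0}^k A_i \qquad \text{(as left $R$-submodules of $A$)}.
\]
The inclusion $\bigoplus_{i=0}^k A_i\subseteq V^k$ is immediate: every standard monomial $x^\alpha=x_1^{\alpha_1}\cdots x_n^{\alpha_n}$ with $|\alpha|\leq k$ can be written as the product of exactly $k$ elements of $V$ by padding with $1$'s. For the reverse inclusion, one expands a product $v_1\cdots v_k$ with $v_i\in V$ by distributivity into $R$-linear combinations of words in $x_1,\dots,x_n$ of length $\leq k$; Proposition \ref{sigmadefinition} ($x_ir=\sigma_i(r)x_i+\delta_i(r)$) moves all coefficients from $R$ to the left, and condition (iv) of Definition \ref{gpbwextension} allows reordering the $x$'s into standard form, both steps producing only terms of equal or strictly smaller total $x$-degree. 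Hence $V^k\subseteq\bigoplus_{i=0}^k A_i$.

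Once this identification is in place, $V^k$ is a free left $R$-module with basis $\{x^\alpha:|\alpha|\leq k\}$, so by the dimension formula in the proof of Proposition \ref{proposition16.5.7},
\[
\dim_R V^k \;=\; \sum_{i=0}^k\binom{n+i-1}{i} \;=\; \binom{n+k}{n}
\]
by the hockey-stick identity. Because $R$ is a left noetherian domain, hence a left Ore domain, remark (i) preceding the definition of ${\rm GGKdim}$ yields ${\rm udim}(V^k)=\dim_R V^k=\binom{n+k}{n}$. This also gives the middle equality in the statement, since $\sum_{i=0}^k\dim_R A_i = \binom{n+k}{n} = {\rm udim}\,V^k$. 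Because $\binom{n+k}{n}$ is a polynomial in $k$ of degree $n$ with leading coefficient $1/n!$, we conclude $\overline{\lim_{k\to\infty}}\log_k\binom{n+k}{n}=n$; on the other hand, Theorem \ref{17.5.10}(ii) shows $\deg(Gp_A(t))=n-1$, so $1+\deg(Gp_A(t))=n$ as well.

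The main obstacle is the identification $V^k=\bigoplus_{i=0}^k A_i$: expanding arbitrary products $v_1\cdots v_k$ forces one to apply the twist relation $x_ir=\sigma_i(r)x_i+\delta_i(r)$ and the reordering relation (iv) repeatedly, and both produce lower-degree correction terms. Care is needed to verify by induction on $k$ (or on the lexicographic complexity of the product) that no monomial of degree exceeding $k$ is ever introduced, so that the inclusion is indeed contained in the claimed filtration piece. Everything else is a direct combinatorial computation.
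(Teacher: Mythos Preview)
Your proof is correct, and indeed the inclusion $V^k\subseteq\bigoplus_{i=0}^kA_i$ that you flag as the ``main obstacle'' is actually immediate: it is nothing more than the filtration property $F_1(A)^k\subseteq F_k(A)$ recorded in Proposition~\ref{proposition17.5.5}(ii), since $V=A_0\oplus A_1=F_1(A)$. No induction on the rewriting process is needed. The reverse inclusion $\bigoplus_{i=0}^kA_i\subseteq V^k$ via padding with $1$'s is the step with actual content, and you handle it correctly.

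Your route differs from the paper's. The paper does \emph{not} establish the equality $V^k=\bigoplus_{i=0}^kA_i$; it uses only the inclusion $V^k\subseteq\bigoplus_{i=0}^kA_i$ to obtain the upper bound ${\rm GGKdim}(A)\leq n$ through a chain of inequalities involving the explicit form of $Gp_A(t)$, and then argues the lower bound separately by passing to auxiliary frames $W=V^{k^{n-1}}$ and invoking ${\rm udim}\,V^{k^n}\geq k^n$. Your argument is tighter: by pinning down $V^k$ exactly as the free $R$-module of rank $\binom{n+k}{n}$, you get both bounds (and the middle equality in the statement) in one stroke from the elementary limit $\log_k\binom{n+k}{n}\to n$. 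What the paper's approach buys is that it never needs the inclusion $\bigoplus_{i=0}^kA_i\subseteq V^k$, so it would transfer more readily to settings where the generating frame does not hit the full filtration piece; what your approach buys is a shorter and more transparent computation in the case at hand.
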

\begin{proof}
According to (\ref{equ17.2.4}), ${\rm GGKdim}(A)=\overline{\lim_{k\to \infty}}\log_k({\rm udim}
V^k)$, with
\begin{center}
$V:=_{R}\langle 1,x_1,\dots,x_n\rangle =A_0\oplus A_1$;
\end{center}
note that $V^k\subseteq A_0\oplus A_1\oplus \cdots\oplus A_k$, from this and using
(\ref{equ17.5.5}) we get
\begin{center}
${\rm GGKdim}(A)\leq \overline{\lim_{k\to \infty}}\log_k({\rm udim} (\sum_{i=0}^k\oplus
A_i))=\overline{\lim_{k\to \infty}}\log_k(\sum_{i=0}^k{\rm udim} A_i)=\overline{\lim_{k\to
\infty}}\log_k(\sum_{i=0}^k \dim_R A_i)=\overline{\lim_{k\to \infty}}\log_k(\sum_{i=0}^k Gp_A(i))=
\overline{\lim_{k\to \infty}}\log_k(Gp_A(0)+Gp_A(1)+\cdots+Gp_A(k))$,
\end{center}
but according to (\ref{equation17.5.4}), every coefficient in $Gp_A(t)$ is positive, so $Gp_A(i)$
is positive for every $0\leq i\leq k$, moreover, $Gp_A(i)\leq Gp_A(k)$, so
$Gp_A(0)+Gp_A(1)+\cdots+Gp_A(k)\leq (k+1)Gp_A(k)$ and hence
\begin{center}
${\rm GGKdim}(A)\leq\overline{\lim_{k\to \infty}}\log_k((k+1)Gp_A(k))=\overline{\lim_{k\to
\infty}}\log_k(k+1)+\overline{\lim_{k\to \infty}}\log_k(Gp_A(k))= 1+\overline{\lim_{k\to
\infty}}\log_k(Gp_A(k))$.
\end{center}
Observe that every summand of $Gp_A(k)$ in the bracket of (\ref{equation17.5.4}) is $\leq k^{n-1}$
for $k$ enough large, so $Gp_A(k)\leq \frac{n}{(n-1)!}k^{n-1}$ for $k\ggg 0$ and this implies that
\begin{center}
${\rm GGKdim}(A)\leq 1+\overline{\lim_{k\to \infty}}\log_k\frac{n}{(n-1)!}+\overline{\lim_{k\to
\infty}}\log_kk^{n-1}=1+0+n-1=1+\deg(Gp_A(t))=n$.
\end{center}
Now we have to prove that ${\rm GGKdim}(A)\geq n$. Note that $W:=V^{k^{n-1}}$ is a frame of $A$ and
${\rm udim}W^{k}= {\rm udim} V^{k^n}\geq k^n$, therefore, $\log_k({\rm udim} V^{k^n})\geq
\log_kk^n=n$, and hence
\begin{center}
${\rm GGKdim}(A)\geq \overline{\lim_{k\to \infty}}\log_k({\rm udim} W^{k})=\overline{\lim_{k\to
\infty}}\log_k({\rm udim} V^{k^n})\geq \overline{\lim_{k\to \infty}}\, n=n$.
\end{center}
\end{proof}

\section{Non-commutative schemes associated to $SG$ rings}

The purpose of this section is to extended the notion of non-commutative projective scheme to the
case of semi-graded rings.  We will assume that the ring $B$ satisfies the following conditions:

(C1) $B$ is left noetherian $SG$.

(C2) $B_0$ is left noetherian.

(C3) For every $n$, $B_n$ is a finitely generated left $B_0$-module.

(C4) $B_0\subset Z(B)$.

\begin{remark}
(i)  From (C4) we have that $B_0$ is a commutative noetherian ring.

(ii) All important examples of skew $PBW$ extensions satisfy (C1) and (C2). Indeed, let
$A=\sigma(R)\langle x_1,\dots,x_n\rangle$ be a bijective skew $PBW$ extension of $R$, assuming that
$R$ is left noetherian, then $A$ is also left noetherian (Theorem \ref{1.3.4}); in addition, by
Proposition \ref{proposition16.5.7}, $A$ also satisfies (C3).

(iii) With respect to condition (C4), it is satisfied for finitely graded $K$-algebras since in
such case $B_0=K$. On the other hand, let $A=\sigma(R)\langle x_1,\dots,x_n\rangle$ be a skew $PBW$
extension of $R$, then in general $R=A_0\nsubseteq Z(A)$, unless $A$ be a $K$-algebra, with $A_0=K$
a commutative ring.

(iv) It is important to remark that some results below can be proved without assuming all
conditions (C1)-(C4). For example, in Definition \ref{definition16.5.34} we only need (C1).
\end{remark}

\begin{proposition}
Let ${\rm sgr}-B$ be the collection of all finitely generated semi-graded $B$-modules, then ${\rm
sgr}-B$ is an abelian category where the morphisms are the homogeneous $B$-homomorphisms.
\end{proposition}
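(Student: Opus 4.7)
The plan is to verify the standard axioms of an abelian category for ${\rm sgr}-B$, leaning on Proposition \ref{proposition17.5.5}(iv) to produce semi-gradations on submodules, images and quotients, and on condition (C1) (left noetherianity of $B$) to keep everything finitely generated.

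First I would establish that ${\rm sgr}-B$ is an additive category. Composition of homogeneous $B$-homomorphisms is homogeneous and the identity is homogeneous, so ${\rm sgr}-B$ is indeed a category; the sum of two homogeneous morphisms is again homogeneous, so each ${\rm Hom}_{{\rm sgr}-B}(M,N)$ is an abelian subgroup of ${\rm Hom}_B(M,N)$ and composition is bilinear. The zero module is a zero object. For $M,N\in {\rm sgr}-B$, the direct sum $M\oplus N$, equipped with the semi-gradation $(M\oplus N)_k:=M_k\oplus N_k$, is finitely generated and semi-graded and serves as the categorical biproduct.

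Next I would construct kernels and cokernels inside the category. Given a homogeneous morphism $f:M\to N$, take $\ker f\subseteq M$ in the ordinary sense. If $z=z_{n_1}+\cdots+z_{n_\ell}\in \ker f$ is the homogeneous decomposition with $z_{n_i}\in M_{n_i}$, homogeneity of $f$ gives $f(z_{n_i})\in N_{n_i}$; since $N=\bigoplus_k N_k$, the equality $f(z)=0$ forces each $f(z_{n_i})=0$, so every homogeneous component of $z$ lies in $\ker f$. Proposition \ref{proposition17.5.5}(iv) then identifies $\ker f$ as a semi-graded submodule of $M$, and the noetherian hypothesis (C1) together with finite generation of $M$ makes $\ker f$ finitely generated. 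For the cokernel, the image ${\rm im}(f)$ is semi-graded with $({\rm im}(f))_k=f(M_k)$ --- the same componentwise argument, applied to preimages, shows it is closed under homogeneous components --- and Proposition \ref{proposition17.5.5}(iv)(c) then supplies the semi-gradation $(N/{\rm im}(f))_k=(N_k+{\rm im}(f))/{\rm im}(f)$, with finite generation inherited as a quotient of $N$.

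Finally I would verify that every monomorphism is the kernel of its cokernel and every epimorphism is the cokernel of its kernel. For a homogeneous monomorphism $f:M\hookrightarrow N$, the natural map $M\to \ker(N\to N/{\rm im}(f))$ is a homogeneous isomorphism by the usual correspondence; the dual statement for epimorphisms follows analogously from the first isomorphism theorem applied componentwise. The only delicate point in the entire argument is ensuring that $\ker f$ and ${\rm im}(f)$ are stable under taking homogeneous components, and this is precisely what Proposition \ref{proposition17.5.5}(iv) guarantees; coupled with (C1) to preserve finite generation, no genuine obstacle remains, and the proof reduces to careful bookkeeping of semi-gradedness and noetherianity.
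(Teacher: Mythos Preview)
Your proposal is correct and follows essentially the same approach as the paper: verify additivity (zero object, biproducts with componentwise semi-graduation, hom-sets are abelian groups), build kernels and cokernels using Proposition~\ref{proposition17.5.5}(iv) together with the noetherian hypothesis (C1), and then check normality and conormality. The paper's own proof is terser and orders the verifications differently, but the substance is identical.
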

\begin{proof}
It is clear that ${\rm sgr}-B$ is a category. ${\rm sgr}-B$ has kernels and co-kernels: Let $M,M'$
be objects of ${\rm sgr}-B$ and let $f:M\to M'$ be an homogeneous $B$-homomorphism; let
$L:=\ker(f)$, since $B$ is left noetherian and $M$ is finitely generated, then $L$ is a finitely
generated semi-graded $B$-module; let $M'/Im(f)$ be the co-kernel of $f$, note that $Im(f)$ is
semi-graded, so $M'/Im(f)$ is a semi-graded finitely generated $B$-module.

${\rm sgr}-B$ is normal and co-normal: Let $f:M\to M'$ be a monomorphism in ${\rm sgr}-B$, then $f$
is the kernel of the canonical homomorphism $j:M'\to M'/Im(f)$. Now, let $f:M\to M'$ be an
epimorphism in ${\rm sgr}-B$, then $f$ is the co-kernel of the inclusion $\iota:\ker(f)\to M'$.

${\rm sgr}-B$ is additive: the trivial module $0$ is an object of ${\rm sgr}-B$; if $\{M_i\}$ is a
finite family of objects of ${\rm sgr}-B$, then its co-product $\bigoplus M_i$ in the category of
left $B$-modules is an object of ${\rm sgr}-B$, with semi-graduation given by
\begin{center}
$(\bigoplus M_i)_p:=\bigoplus(M_i)_p$, $p\in \mathbb{Z}$.
\end{center}
Thus, ${\rm sgr}-B$ has finite co-products. Finally, for any objects $M,M'$ of ${\rm sgr}-B$,
$Mor(M,M')$ is an abelian group and the composition of morphisms is bilinear with respect the
operations in these groups.
\end{proof}

\begin{definition}\label{definition16.5.30}
Let $M$ be an object of ${\rm sgr}-B$.
\begin{enumerate}
\item[\rm (i)]For  $s\geq 0$, $B_{\geq s}$ is the
least two-sided ideal of $B$ that satisfies the following conditions:
\begin{enumerate}
\item[\rm (a)]$B_{\geq s}$ contains $\bigoplus_{p\geq s}B_p$.
\item[\rm (b)]$B_{\geq s}$ is semi-graded as left ideal of
$B$.
\item[\rm (c)]$B_{\geq s}$ is a direct sumand of $B$.
\end{enumerate}
\item[\rm (ii)]An element $x\in M$ is torsion if
there exist $s,n\geq 0$ such that $B_{\geq s}^{\ n}x=0$; the set of torsion elements of $M$ is
denoted by $T(M)$; $M$ is \textit{torsion} if $T(M)=M$ and \textit{torsion-free} if $T(M)=0$.
\item[\rm (iii)]For $s,n\geq 0$, $M_{s,n}$ will denote the
least semi-graded submodule of $M$ containing $B_{\geq s}^nM$.
\end{enumerate}
\end{definition}

\begin{remark}
(i) Observe that if $B$ is $\mathbb{N}$-graded, then $B_{\geq s}=\bigoplus_{p\geq s}B_p$.

(ii) Note that $T(M)$ is a submodule of $M$: In fact, let $x,y\in T(M)$, then there exist
$r,s,n,m\geq 0$ such that $B_{\geq s}^{\ n}x=0$ and $B_{\geq r}^{\ m}y=0$; observe that $B_{\geq
r+s}\subseteq B_{\geq s},B_{\geq r}$, so $B_{\geq r+s}^{\ n+m}x\subseteq B_{\geq s}^{\ n}x=0$ and
$B_{\geq r+s}^{\ n+m}y\subseteq B_{\geq r}^{\ m}y=0$, whence $B_{\geq r+s}^{\ n+m}(x+y)=0$, i.e.,
$x+y\in T(M)$; if $b\in B$, then $B_{\geq s}^{\ n}b\subseteq B_{\geq s}^{\ n}$, so $B_{\geq s}^{\
n}bx\subseteq B_{\geq s}^{\ n}x=0$, i.e., $bx\in T(M)$.

(iii) Since $M$ is noetherian, $M_{s,n}$ is finitely generated, i.e., $M_{s,n}$ is an object of
${\rm sgr}-B$. Moreover, $M/M_{s,n}$ is torsion because $B_{\geq s}^nM\subseteq M_{s,n}$. In
addition, note that $M_{s,n}$ is a direct summand of $M$.

(iv) If we assume that $B$ is a  domain, and hence, a left Ore domain, an alternative notion of
torsion can be defined as in the classical case of commutative domains: An element $x\in M$ is
torsion if there exists $b\neq 0$ in $B$ such that $bx=0$; the set $t(M)$ of torsion elements of
$M$ is in this case also a submodule of $M$. In addition, note that $T(M)\subseteq t(M)$: Since
$B_{\geq s}\neq 0$, let $b\neq 0$ in $B_{\geq s}$, then $b^nx=0$ and $b^n\neq 0$, i.e., $x\in
t(M)$.

(v) It is clear that the collection $\mathcal{T}$ of modules $M$ in ${\rm sgr}-B$ such that
$t(M)=M$ conforms a full subcategory of ${\rm sgr}-B$. Moreover, let $0\rightarrow M' \rightarrow
M\rightarrow M''\rightarrow 0$ be a short exact sequence in ${\rm sgr}-B$; it is obvious that
$t(M)=M$  if and only if $t(M')=M'$ and $t(M'')=M''$, i.e., the collection $\mathcal{T}$ is a Serre
subcategory of ${\rm sgr}-B$. The next lemma shows that this property is satisfied also by the
torsion modules introduced in Definition \ref{definition16.5.30}.
\end{remark}

\begin{theorem}\label{theorem16.6.5}
The collection ${\rm stor}-B$ of torsion modules forms a Serre subcategory of ${\rm sgr}-B$, and
the quotient category
\begin{center}
${\rm qsgr}-B:={\rm sgr}-B/{\rm stor}-B$
\end{center}
is abelian.
\end{theorem}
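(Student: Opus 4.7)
The plan is to verify the two Serre-subcategory closure properties (under subobjects/quotients, and under extensions) and then invoke the standard Gabriel construction of the quotient. The linchpin, which I would establish first as a lemma, is that every finitely generated semi-graded torsion module admits a \emph{uniform} torsion bound: there exist $s,n\geq 0$ with $B_{\geq s}^{n}M=0$.

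To prove the lemma, write $M=Bx_1+\cdots+Bx_k$ and pick witnesses $B_{\geq s_i}^{n_i}x_i=0$, then set $s:=\max_i s_i$ and $n:=\max_i n_i$. The minimality clause of Definition~\ref{definition16.5.30}(i) forces $B_{\geq s}\subseteq B_{\geq s_i}$ for each $i$, because $B_{\geq s_i}$ itself is a two-sided ideal that contains $\bigoplus_{p\geq s}B_p$, is semi-graded as a left ideal and a direct summand of $B$, so it is a valid candidate for conditions (a)-(c) at index $s$. Since $B_{\geq s_i}^{n}\subseteq B_{\geq s_i}^{n_i}$ whenever $n\geq n_i$ (powers of a two-sided ideal nest), we obtain $B_{\geq s}^{n}x_i=0$ for each generator; because $B_{\geq s}^{n}$ is itself a two-sided ideal of $B$, this conclusion extends from generators to all of $M$.

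With the lemma in hand, closure of ${\rm stor}-B$ under subobjects and quotients in ${\rm sgr}-B$ is immediate from the pointwise definition of torsion, and by noetherianity of $B$ both the semi-graded submodule and the semi-graded quotient stay in ${\rm sgr}-B$. For closure under extensions, given $0\to M'\to M\to M''\to 0$ in ${\rm sgr}-B$ with $M',M''$ torsion, I apply the lemma (noting that $M'$ is finitely generated since $B$ is left noetherian and $M$ is) to obtain $B_{\geq s'}^{n'}M'=0$ and $B_{\geq s''}^{n''}M''=0$. Setting $t:=\max(s',s'')$, the same monotonicity yields $B_{\geq t}\subseteq B_{\geq s'}\cap B_{\geq s''}$; hence $B_{\geq t}^{n''}M\subseteq M'$, and
\[
B_{\geq t}^{\,n'+n''}M \;=\; B_{\geq t}^{\,n'}\bigl(B_{\geq t}^{\,n''}M\bigr) \;\subseteq\; B_{\geq t}^{\,n'}M' \;=\; 0,
\]
so $M$ is uniformly torsion, completing the Serre property.

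Once ${\rm stor}-B$ is known to be a Serre subcategory, the abelianness of ${\rm qsgr}-B:={\rm sgr}-B/{\rm stor}-B$ is the classical Gabriel--Grothendieck theorem: the quotient of an abelian category by a Serre subcategory, formed by inverting those morphisms whose kernel and cokernel lie in the subcategory, is again abelian. The only step I expect to require real care is the monotonicity $B_{\geq s}\subseteq B_{\geq s_i}$ for $s\geq s_i$, which hinges on verifying that $B_{\geq s_i}$ qualifies as an admissible candidate for $B_{\geq s}$ in Definition~\ref{definition16.5.30}(i); everything else is routine once this monotonicity, together with the two-sided-ideal stability of the powers $B_{\geq s}^{n}$, is in place.
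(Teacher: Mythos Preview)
Your proof is correct and rests on the same two ingredients as the paper's --- the monotonicity $B_{\geq s}\subseteq B_{\geq s'}$ for $s\geq s'$ (which you rightly trace back to the minimality clause in Definition~\ref{definition16.5.30}(i)) and the fact that each $B_{\geq s}^{n}$ is a two-sided ideal --- but the organization is genuinely different. You first extract a uniform-bound lemma (any finitely generated torsion module $M$ satisfies $B_{\geq s}^{n}M=0$ for suitable $s,n$), invoking noetherianity through the finite generation of $M'$; the extension step then collapses to the two-line computation $B_{\geq t}^{n''}M\subseteq M'$ and $B_{\geq t}^{\,n'+n''}M=0$. The paper instead argues pointwise: for a single $x\in M$ it first lands $B_{\geq s}^{n}x$ inside $Im(\iota)$, then uses noetherianity of $B$ to write $B_{\geq s}^{n}=Ba_1+\cdots+Ba_l$ and kills each $a_ix$ with its own $B_{\geq r_i}^{m_i}$, finally combining via $B_{\geq r+s}^{\,m+n}\subseteq B_{\geq r}^{m}B_{\geq s}^{n}$. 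Your route is cleaner and more modular; the paper's avoids ever needing $M'$ itself to be finitely generated, using instead that the ideal $B_{\geq s}^{n}$ is. Both conclude by citing the Gabriel--Grothendieck theorem for the abelianness of the quotient.
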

\begin{proof}
It is obvious that  ${\rm stor}-B$ is a full subcategory of ${\rm sgr}-B$. Let $0\rightarrow M'
\xrightarrow{\iota} M\xrightarrow{j} M''\rightarrow 0$ be a short exact sequence in ${\rm sgr}-B$.

Suppose that $M$ is in ${\rm stor}-B$ and let $x'\in M'$, then $\iota(x')\in M$ and there exist
$s,n\geq 0$ such that $\iota(B_{\geq s}^{\ n}x')=B_{\geq s}^{\ n}\iota(x')=0$, but since $\iota$ is
injective, then $B_{\geq s}^{\ n}x'=0$. This means that $x'\in T(M')$, so $T(M')=M'$, i.e., $M'$ is
in ${\rm stor}-B$. Now let $x''\in M''$, then there exists $x\in M$ such that $j(x)=x''$; there
exist $r,m\geq 0$ such that $B_{\geq r}^{\ m}x=0$, whence $B_{\geq s}^{\ n}x''=0$, this implies
that $x''\in T(M'')$. Thus, $T(M'')=M''$, i.e., $M''$ is in ${\rm stor}-B$.

Conversely, suppose that $M'$ and $M''$ are in ${\rm stor}-B$; let $x\in M$, then there exist
$s,n\geq 0$ such that $B_{\geq s}^{\ n}j(x)=0$, i.e., $j(B_{\geq s}^{\ n}x)=0$. Therefore, $B_{\geq
s}^{\ n}x\subseteq \ker(j)=Im(\iota)$, but since $M'$ is torsion, then $Im(\iota)$ is also a
torsion module. Because $B$ is left noetherian, there exist $a_1,\dots,a_l\in B_{\geq s}^{\ n}$
such that $B_{\geq s}^{\ n}=Ba_1+\cdots+Ba_l$; there exist $r_i,m_i\geq 0$, $1\leq i\leq l$, such
that $B_{\geq r_i}^{\ m_i}a_ix=0$. Without lost of generality we can assume that $r_1\geq r_i$ for
every i, so $B_{\geq r_1}\subseteq B_{\geq r_i}$ and hence $B_{\geq r_1}^{\ m_1}\subseteq B_{\geq
r_1}^{\ m_1}, B_{\geq r_1}^{\ m_2}\subseteq B_{\geq r_2}^{\ m_2}, \dots,  B_{\geq r_1}^{\
m_l}\subseteq B_{\geq r_l}^{\ m_l}$; from this we get that $B_{\geq r_1}^{\ m_1}a_1x=0, B_{\geq
r_1}^{\ m_2}a_2x=0,\dots, B_{\geq r_1}^{\ m_l}a_lx=0$, let $m:=\max\{m_1,\dots,m_l\}$, then
$B_{\geq r}^{\ m}a_ix=0$ for every $1\leq i\leq l$, with $r:=r_1$. Therefore, $B_{\geq r}^{\
m}B_{\geq s}^{\ n}x=B_{\geq r}^{\ m}(Ba_1+\cdots+Ba_l)x= B_{\geq r}^{\ m}a_1x+\cdots +B_{\geq r}^{\
m}a_lx=0$, i.e., $B_{\geq r}^{\ m}B_{\geq s}^{\ n}x=0$. Observe that $B_{\geq r+s}^{\ m}\subseteq
B_{\geq r}^{\ m}$ and $B_{\geq r+s}^{\ n}\subseteq B_{\geq s}^{\ n}$, so $B_{\geq r+s}^{\
m+n}\subseteq B_{\geq r}^{\ m}B_{\geq s}^{\ n}$ and hence $B_{\geq r}^{\ m+n}x=0$, i.e., $x\in
T(M)$. We have proved that $T(M)=M$, i.e., $M$ is in ${\rm stor}-B$.

The second statement of the theorem is a well known property of abelian categories. We want to
recall that the objects of ${\rm qsgr}-B$ are the objects of ${\rm sgr}-B$; moreover, given $M,N$
objects of ${\rm qsgr}-B$ the set of morphisms from $M$ to $N$ in the category ${\rm qsgr}-B$ is
defined by
\begin{center}
$Hom_{\rm {qsgr}-B}(M,N):=\underrightarrow{\lim}Hom_{{\rm sgr}-B}(M', N/N')$,
\end{center}
where the direct limit is taken over all $M'\subseteq M$, $N'\subseteq N$ in ${\rm sgr}-B$ with
$M/M'\in {\rm stor}-B$ and $N'\in {\rm stor}-B$ (see \cite{Grothendieck}, \cite{Gabriel}, or also
\cite{Smith} Proposition 2.13.4 ). More exactly, the limit is taken over the set $\mathcal{P}$ of
all pairs $(M',N')$ in ${\rm sgr}-B$ such that $ M'\subseteq M$, $N'\subseteq N$, $M/M'\in {\rm
stor}-B$ and $N'\in {\rm stor}-B$. The set $\mathcal{P}$ is partially ordered with order defined by
\begin{center}
$(M',N')\leq (M'',N'')$ if and only if $M''\subseteq M'$ and $N'\subseteq N''$.
\end{center}
$\mathcal{P}$ is directed: Indeed, given $(M',N'),(M'',N'')\in \mathcal{P}$ we apply Proposition
\ref{proposition17.5.5} and the fact that $B$ is left noetherian to conclude that $(M'\cap
M'',N'+N'')\in \mathcal{P}$, and this couple satisfies $(M',N')\leq (M'\cap M'',N'+N'')$,
$(M'',N'')\leq (M'\cap M'',N'+N'')$.

\end{proof}

We have all ingredients in order to define non-commutative schemes associated to semi-graded rings.

\begin{definition}\label{definition16.5.34}
We define
\begin{center}
${\rm sproj}(B):=({\rm qsgr}-B,\pi(B))$
\end{center}
and we call it the non-commutative semi-projective scheme associated to $B$.
\end{definition}

\section{Serre-Artin-Zhang-Verevkin theorem for semi-graded rings}

We conclude the paper investigating the non-commutative version of Serre-Artin-Zhang-Verevkin
theorem for semi-graded rings. For this goal some preliminaries are needed.

\begin{definition}
Let $M$ be a semi-graded $B$-module, $M=\bigoplus_{n\in \mathbb{Z}} M_n$. Let $i\in \mathbb{Z}$,
the semi-graded module $M(i)$ defined by $M(i)_{n}:=M_{i+n}$ is called a shift of $M$, i.e.,
\begin{center}
$M(i)=\bigoplus_{n\in \mathbb{Z}}M(i)_n=\bigoplus_{n\in \mathbb{Z}} M_{i+n}$.
\end{center}
\end{definition}

\begin{remark}
Note that for every $i\in \mathbb{Z}$, $M\cong M(i)$ as $B$-modules. The isomorphism is given by
\begin{center}
$M=\bigoplus_{n\in \mathbb{Z}} M_n \xrightarrow{\phi_i} \bigoplus_{n\in \mathbb{Z}} M_{i+n}=M(i)$

$m_{n_1}+\cdots+m_{n_t}\in M_{n_1}+\cdots+M_{n_t}\mapsto m_{n_1}+\cdots+m_{n_t}\in
M(i)_{n_1-i}+\cdots+M(i)_{n_t-i}$.
\end{center}
$\phi_i$ is not homogeneous for $i\neq 0$.
\end{remark}

The next proposition shows that the shift of degrees is an autoequivalence.

\begin{proposition}
Let $s:{\rm sgr}-B\to {\rm sgr}-B$ defined by
\begin{center}
$M\mapsto M(1)$,

$M\xrightarrow{f} N\mapsto M(1)\xrightarrow{f(1)} N(1)$,

$f(1)(m):=f(m)$, $m\in M(1)$.
\end{center}
Then,
\begin{enumerate}
\item[\rm (i)]$s$ is an autoequivalence.
\item[\rm (ii)]For every $d\in \mathbb{Z}$, $s^d(M)=M(d)$.
\item[\rm (iii)]$s$ induces an autoequivalence of ${\rm qsgr}-B$ also denoted by $s$.
\end{enumerate}
\end{proposition}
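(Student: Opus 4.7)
The plan is to verify in order that (i) the assignment $s$ is a well-defined functor on ${\rm sgr}-B$ which is invertible up to equality, (ii) iteration of $s$ matches arbitrary shifts, and (iii) the functor respects the torsion subcategory and therefore descends to the Serre quotient.

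First I would check that $s$ is a functor from ${\rm sgr}-B$ to itself. Given $M=\bigoplus_{n\in\mathbb{Z}}M_n$ in ${\rm sgr}-B$, I must show $M(1)$ lies in ${\rm sgr}-B$. The decomposition $M(1)=\bigoplus_{n\in\mathbb{Z}}M_{n+1}$ is a direct sum of subgroups, and for $b\in B_m$, $x\in M(1)_n=M_{n+1}$ one has $bx\in \bigoplus_{k\leq m+n+1}M_k=\bigoplus_{k'\leq m+n}M(1)_{k'}$, so axiom (ii) of a semi-graded module is satisfied; finite generation of $M(1)$ as a $B$-module is inherited from $M$ via the (non-homogeneous) identity map on underlying sets. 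For a homogeneous $f:M\to N$, the map $f(1)$ is homogeneous because $f(1)(M(1)_n)=f(M_{n+1})\subseteq N_{n+1}=N(1)_n$. Functoriality ($s(\mathrm{id})=\mathrm{id}$, $s(g\circ f)=s(g)\circ s(f)$) is immediate from the fact that $f(1)$ acts as $f$ on elements.

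Next I would exhibit a strict inverse. Defining $s^{-1}$ by $M\mapsto M(-1)$ and $f\mapsto f(-1)$, exactly the same verification shows $s^{-1}$ is a functor on ${\rm sgr}-B$, and directly from the definitions $s(s^{-1}(M))_n=M(-1)(1)_n=M(-1)_{n+1}=M_n$ and similarly $s^{-1}(s(M))=M$ with the same equalities on morphisms. Hence $s\circ s^{-1}=\mathrm{id}=s^{-1}\circ s$ as functors, giving (i). For (ii), a straightforward induction on $|d|$ proves $s^d(M)_n=M_{n+d}=M(d)_n$; for $d\geq 0$ the inductive step uses $s^{d+1}(M)=s(M(d))$ with $M(d)(1)_n=M(d)_{n+1}=M_{n+d+1}=M(d+1)_n$, and the case $d<0$ is handled identically using $s^{-1}$.

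For (iii), the key observation is that $s$ and $s^{-1}$ send torsion modules to torsion modules. The point is that $M$ and $M(1)$ have identical underlying $B$-module structure (the semi-graduation is merely re-indexed), so the condition $B_{\geq s}^{\,n}x=0$ used in Definition \ref{definition16.5.30} defining torsion is the same condition for $x\in M$ and for $x\in M(1)$; therefore $T(M)=T(M(1))$, and $M\in{\rm stor}-B$ iff $M(1)\in{\rm stor}-B$. Hence $s$ restricts to an autoequivalence of the Serre subcategory ${\rm stor}-B$, and by the universal property of the Gabriel--Grothendieck quotient (cf.\ Theorem \ref{theorem16.6.5}) it descends to an exact functor on ${\rm qsgr}-B$ with quasi-inverse induced by $s^{-1}$. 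The main (mild) obstacle is simply bookkeeping: confirming that the direct-limit description of $\mathrm{Hom}_{{\rm qsgr}-B}(M,N)$ given in the proof of Theorem \ref{theorem16.6.5} transports correctly under the pairwise substitutions $M'\mapsto M'(1)$, $N'\mapsto N'(1)$, which it does because both the partial order on $\mathcal{P}$ and the maps $\mathrm{Hom}_{{\rm sgr}-B}(M',N/N')\to \mathrm{Hom}_{{\rm sgr}-B}(M'(1),N(1)/N'(1))$ are preserved by the strict equality $s\circ s^{-1}=\mathrm{id}$ established above.
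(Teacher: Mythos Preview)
Your proof is correct and follows essentially the same approach as the paper: the paper declares (i) and (ii) ``evident'' and for (iii) simply notes that $M\in{\rm stor}-B$ implies $s(M)\in{\rm stor}-B$, which is precisely the torsion-preservation observation you spell out. You have just filled in the routine verifications that the paper leaves implicit.
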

\begin{proof}
(i) and (ii) are evident. For (iii) we only have to observe that if $M$ is an object of ${\rm
stor}-B$, then $s(M)$ is also an object of ${\rm stor}-B$ .
\end{proof}

\begin{definition}
Let $M,N$ be objects of ${\rm sgr}-B$. Then
\begin{enumerate}
\item[\rm (i)]$\underline{Hom}_B(M,N):=\bigoplus_{d\in \mathbb{Z}}Hom_{{\rm sgr}-B}(M,N(d))$.
\item[\rm (ii)]$\underline{Ext}_B^i(M,N):=\bigoplus_{d\in \mathbb{Z}}Ext^i_{{\rm sgr}-B}(M,N(d))$.
\end{enumerate}
\end{definition}

\begin{remark}\label{proposition16.5.39}
Note that $\underline{Hom}_B(M,N)\hookrightarrow Hom_B(M,N)$. In fact, we have the group
homomorphism $\iota: \underline{Hom}_B(M,N)\to Hom_B(M,N)$ given by $(\dots, 0,f_{d_1},\dots,
f_{d_t}, 0, \dots)\mapsto f_{d_1}+\cdots +f_{d_t}$; observe that $f_{d_1}+\cdots +f_{d_t}=0$ if and
only if $f_{d_1}=\cdots =f_{d_t}=0$. Indeed, let $m\in M$ be homogeneous of degree $p$, then
$0=(f_{d_1}+\cdots +f_{d_t})(m)=f_{d_1}(m)+\cdots+f_{d_t}(m)\in N_{d_1+p}\oplus \cdots \oplus
N_{d_t+p}$, whence, for every $j$, $f_{d_j}(m)=0$. This means that $f_{d_j}=0$ for $1\leq j\leq t$,
and hence, $\iota$ is injective.
\end{remark}

\begin{proposition}\label{proposition16.5.40}
Let $M$ and $N$ be semi-graded $B$-modules such that every of its homogeneous components are
$B_0$-modules. Then,
\begin{enumerate}
\item[\rm (i)]$Hom_{{\rm sgr}-B}(M,N)$ is a $B_0$-module.
\item[\rm (ii)]$\underline{Hom}_{B}(M,N)$ is a $B_0$-module.
\item[\rm (iii)]$Ext_{{\rm sgr}-B}^i(M,N)$ is a $B_0$-module for every $i\geq 1$.
\item[\rm (iv)]$\underline{Ext}_{B}^i(M,N)$ is a $B_0$-module for every $i\geq 1$.
\end{enumerate}
\end{proposition}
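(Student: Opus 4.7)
The plan is to use condition (C4) systematically: since $B_0 \subseteq Z(B)$, multiplication by any element $b_0 \in B_0$ is not merely a $B_0$-linear endomorphism of $N$, it is actually a $B$-module endomorphism $\mu_{b_0}: N \to N$. Moreover, this endomorphism is homogeneous, because each component $N_n$ is assumed to be a $B_0$-module (so $b_0 N_n \subseteq N_n$), hence $\mu_{b_0}$ is a morphism in ${\rm sgr}-B$. This single observation powers all four items.

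For part (i), I would define the action by $(b_0 \cdot f)(m) := b_0 f(m) = \mu_{b_0} \circ f(m)$. Centrality gives $(b_0 f)(bm) = b_0 b f(m) = b b_0 f(m) = b (b_0 f)(m)$, so $b_0 f$ is $B$-linear; homogeneity follows because $\mu_{b_0}$ is homogeneous and the composition of two homogeneous maps is homogeneous. The $B_0$-module axioms are inherited immediately from those of $N$. For part (ii), the definition $\underline{Hom}_B(M,N) = \bigoplus_{d\in\mathbb{Z}} Hom_{{\rm sgr}-B}(M, N(d))$ expresses the group as a direct sum of groups that are $B_0$-modules by (i), noting that each shifted module $N(d)$ also has $B_0$-module homogeneous components, since $N(d)_n = N_{d+n}$.

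For part (iii), I would pass to derived functors. The functor $Hom_{{\rm sgr}-B}(M, -)$ carries a natural transformation $\mu_{b_0,*}$ induced by post-composition with the homogeneous $B$-linear map $\mu_{b_0}$; by the universal property of derived functors, this extends to natural transformations on each $Ext^i_{{\rm sgr}-B}(M,-)$. Concretely, choosing an injective resolution $0 \to N \to I^\bullet$ in ${\rm sgr}-B$, each $Hom_{{\rm sgr}-B}(M, I^j)$ is a $B_0$-module by (i), and the differentials are $B_0$-linear because $\mu_{b_0}$ commutes with every $B$-linear map by centrality; hence the cohomology $Ext^i_{{\rm sgr}-B}(M, N)$ inherits a $B_0$-module structure which is independent of the resolution. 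Part (iv) then follows exactly as (ii): $\underline{Ext}^i_B(M,N)$ is a direct sum of $B_0$-modules because each $N(d)$ has $B_0$-module homogeneous components.

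The main technical obstacle I foresee is justifying part (iii) rigorously: one must know that ${\rm sgr}-B$ has enough injectives so that $Ext^i_{{\rm sgr}-B}$ is well-defined, and one must verify that the $B_0$-action does not depend on the choice of resolution. Both issues are handled by the fact that $\mu_{b_0}$ is a natural endomorphism of the identity functor on ${\rm sgr}-B$ (again because $b_0$ is central and $B_0$ preserves each homogeneous component), so it automatically induces a well-defined natural endomorphism on every derived functor. With this observation in place the proof reduces to routine verifications of the module axioms, all of which transfer from the corresponding axioms on $N$.
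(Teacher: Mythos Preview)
Your proposal is correct and follows essentially the same route as the paper: define the $B_0$-action by $(b_0\cdot f)(m)=b_0 f(m)$, use centrality (C4) for $B$-linearity and the hypothesis $b_0 N_p\subseteq N_p$ for homogeneity, then deduce (ii) and (iv) as direct sums. The only cosmetic difference is in (iii), where the paper computes $Ext$ via a projective resolution of $M$ and applies $Hom_{{\rm sgr}-B}(-,N)$, whereas you use an injective resolution of $N$; your observation that $\mu_{b_0}$ is a natural endomorphism of the identity functor is a cleaner way to see resolution-independence, but the substance is the same.
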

\begin{proof}
(i) If $f\in Hom_{{\rm sgr}-B}(M,N)$ and $b_0\in B_0$, then product $b_0\cdot f$ defined by
$(b_0\cdot f)(m):=b_0\cdot f(m)$, $m\in M$, is an element of $Hom_{{\rm sgr}-B}(M,N)$: In fact,
$b_0\cdot f$ is obviously additive; let $b\in B$, then $(b_0\cdot f)(b\cdot m)=b_0\cdot f(b\cdot
m)=b_0[b\cdot f(m)]= (b_0b)\cdot f(m)=(bb_0)\cdot f(m)=b\cdot (b_0\cdot f(m))=b\cdot (b_0\cdot
f)(m)$; $b_0\cdot f$ is homogeneous: Let $m\in M_p$, then $(b_0\cdot f)(m)=b_0\cdot f(m)\in
b_0\cdot N_p\subseteq N_p$, for every $p\in\mathbb{Z}$. It is easy to check that  $Hom_{{\rm
sgr}-B}(M,N)$ is a $B_0$-module with the defined product.

(ii) This follows from (i).

(iii) Taking a projective resolution of $M$ in the abelian category ${\rm sgr}-B$ and applying the
functor $Hom_{{\rm sgr}-B}(-,N)$, it is easy to verify using (i) that in the complex defining
$Ext_{{\rm sgr}-B}^i(M,N)$ the kernels and the images are $B_0$-modules, i.e., every abelian group
$Ext_{{\rm sgr}-B}^i(M,N)$ is a $B_0$-module.

(iv) This follows from (iii).
\end{proof}

\begin{definition}
Let $i\geq 0$; we say that $B$ satisfies the s-$\chi_i$ condition if for every finitely generated
semi-graded $B$-module $N$ and for any $j\leq i$, $\underline{Ext}_B^j(B/B_{\geq 1},N)$ is finitely
generated as $B_0$-module. The ring $B$ satisfies the s-$\chi$ condition if it satisfies the
s-$\chi_i$ condition for all $i\geq 0$.
\end{definition}

\begin{remark}
(i) By Proposition \ref{proposition16.5.40}, $\underline{Ext}_B^j(B/B_{\geq 1},N)$ is a
$B_0$-module.

(ii) In the theory of graded rings and modules the conditions defined above are usually denoted
simply by $\chi_i$ and $\chi$. In this situation, $B/B_{\geq 1}\cong B_0$.

(iii) Observe that in the case of finitely graded $K$-algebras, $B_0=K$, $B/B_{\geq 1}\cong K$ and
the the condition $s-\chi_i$ means that ${\rm dim}_{K}\underline{Ext}_B^j(K,N)<\infty$.
\end{remark}

\begin{definition}
Let $s$ be the autoequivalence of ${\rm qsgr}-B$ defined by the shifts of degrees. We define
\begin{center}
$\Gamma(\pi(B))_{\geq 0}:=\bigoplus_{d=0}^\infty Hom_{{\rm qsgr}-B}(\pi(B),s^d(\pi(B)))$.
\end{center}
\end{definition}

Following the ideas in the proof of Theorem 4.5 in \cite{Artin2} and Proposition 4.11 in
\cite{Rogalski} we get the following key lemma.

\begin{lemma}\label{lemma6.11}
Let $B$ be a ring that satisfies {\rm (C1)-(C4)}.
\begin{enumerate}
\item[\rm (i)]$\Gamma(\pi(B))_{\geq 0}$ is a $\mathbb{N}$-graded ring.
\item[\rm (ii)]Let $\underline{B}:=\bigoplus_{d=0}^\infty Hom_{{\rm sgr}-B}(B,s^d(B))$.
Then, $\underline{B}$ is a $\mathbb{N}$-graded ring and there exists a ring homomorphism
$\underline{B}\to \Gamma(\pi(B))_{\geq 0}$.
\item[\rm (iii)]For any object $M$ of ${\rm sgr}-B$
\begin{center}
$\Gamma(M)_{\geq 0}:=\bigoplus_{d=0}^\infty Hom_{{\rm sgr}-B}(B,s^d(M))$
\end{center}
is a graded $\underline{B}$-module, and
\begin{center}
$\Gamma(\pi(M))_{\geq 0}:=\bigoplus_{d=0}^\infty Hom_{{\rm qsgr}-B}(\pi(B),s^d(\pi(M)))$
\end{center}
is a graded $\Gamma(\pi(B))_{\geq 0}$-module.
\item[\rm (iv)]$\underline{B}$ has the following properties:
\begin{enumerate}
\item[\rm (a)]$(\underline{B})_0\cong B_0$ and $\underline{B}$ satisfies {\rm (C2)}.
\item[\rm (b)]$\underline{B}$ satisfies {\rm (C3)}. More generally, let $N$ be a finitely generated graded
$\underline{B}$-module, then every homogeneous component of $N$ is finitely generated over
$(\underline{B})_0$.
\item[\rm (c)]$\underline{B}$ satisfies {\rm (C1)}.
\item[\rm (d)]If $B$ is a domain, then $\underline{B}$ is also a domain.
\end{enumerate}
\item[\rm (v)] If $B$ is a domain, then
\begin{enumerate}
\item[\rm (a)]$\Gamma(\pi(B))_{\geq 0}$ satisfies {\rm (C2)}.
\item[\rm (b)]$\Gamma(\pi(B))_{\geq 0}$ satisfies {\rm (C3)}. More generally, let $N$ be a finitely generated graded
$\Gamma(\pi(B))_{\geq 0}$-module, then every homogeneous component of $N$ is finitely generated
over $(\Gamma(\pi(B))_{\geq 0})_0$.
\item[\rm (c)]$\Gamma(\pi(B))_{\geq 0}$ satisfies {\rm (C1)}.
\item[\rm (d)]If $\underline{B}$ satisfies $\mathcal{X}_1$, then $\Gamma(\pi(B))_{\geq 0}$ satisfies $\mathcal{X}_1$.
\item[\rm (e)]$\Gamma(\pi(B))_{\geq 0}$ is a domain.
\end{enumerate}
\end{enumerate}
\end{lemma}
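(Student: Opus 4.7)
The plan is to handle (i)--(iii) by defining the ring and module structures via the standard composition $f \cdot g := s^{d}(g) \circ f$, reduce (iv) to known properties of $B$ by embedding $\underline{B}$ into $B$ via $f \mapsto f(1)$, and then transfer those properties to $\Gamma(\pi(B))_{\geq 0}$ in (v) using the ring homomorphism of (ii) together with the $\mathcal{X}_1$-hypothesis. For (i) and (ii), given a degree-$d$ morphism $f$ and a degree-$e$ morphism $g$, the composite $s^{d}(g)\circ f$ lies in degree $d+e$; associativity follows from the functoriality of each shift $s^{d}$, and the identity of $\pi(B)$ in degree $0$ serves as the unit. The same formula furnishes $\underline{B}$ with its $\mathbb{N}$-graded ring structure, and applying $\pi$ componentwise yields a homogeneous ring homomorphism $\underline{B}\to \Gamma(\pi(B))_{\geq 0}$. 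The module structures in (iii) are given by the analogous composition with one factor being a morphism into $s^{d}(M)$.

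For (iv), the key step is the identification
\[
(\underline{B})_d = Hom_{{\rm sgr}-B}(B, B(d)) \cong \widetilde{B}_d := \{b \in B_d : B_n b \subseteq B_{n+d} \text{ for all } n \geq 0\},
\]
via $f \mapsto f(1)$, using that $B$ is cyclic as a left $B$-module. Assertion (a) then follows because condition (C4) forces $B_n b = b B_n \subseteq B_n$ for every $b \in B_0$, so $\widetilde{B}_0 = B_0$, which is noetherian by (C2). For (b), $\widetilde{B}_d$ is a $B_0$-submodule of the finitely generated $B_0$-module $B_d$ (condition (C3)), hence finitely generated over the noetherian ring $B_0$; the more general statement about graded $\underline{B}$-modules follows by writing each homogeneous component as a finite sum $\sum_i (\underline{B})_{k-k_i}\, n_i$ over homogeneous generators $n_i$ of degrees $k_i$. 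For (d), with the convention $f\cdot g = s^{d}(g)\circ f$, one checks that $f \mapsto f(1)$ is a ring \emph{homomorphism} $\underline{B} \hookrightarrow B$, so $\underline{B}$ inherits the domain property from $B$. Part (c) is delicate: my plan is to lift the semi-graded generating set $x_1,\ldots,x_n$ from Definition \ref{definition17.5.4}(ii) to elements of $\widetilde{B}_{\deg x_i}$ (verifying that $B_n x_i \subseteq B_{n+\deg x_i}$ on the nose), exhibit $\underline{B}$ as a finitely generated $\underline{B}_0$-algebra, and apply a graded Hilbert basis theorem over the commutative noetherian ring $\underline{B}_0 \cong B_0$.

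For (v), the ring homomorphism $\underline{B} \to \Gamma(\pi(B))_{\geq 0}$ from (ii) is the carrier. In each degree $d$, the gap between $Hom_{{\rm sgr}-B}(B, s^{d}(B))$ and $Hom_{{\rm qsgr}-B}(\pi(B), s^{d}(\pi(B)))$ is controlled by torsion submodules and $\underline{Ext}^{1}_B(B/B_{\geq 1},-)$ terms, and the $\mathcal{X}_1$-hypothesis, applied in the form used in the proof of Theorem 4.5 of \cite{Artin2} and Proposition 4.11 of \cite{Rogalski}, forces this gap to be finitely generated over $B_0$ in each degree. This allows the properties (a)--(c) and (e) established for $\underline{B}$ in (iv) to be transferred to $\Gamma(\pi(B))_{\geq 0}$; for (d) one chases the $\mathcal{X}_1$-condition through the same exact sequences relating $\Gamma$ with its torsion-corrected version, so that $\Gamma(\pi(B))_{\geq 0}$ and $\underline{B}$ agree in high degree and the condition passes along.

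The main obstacle is (iv)(c). Merely embedding $\underline{B}$ into the noetherian ring $B$ is insufficient because subrings of noetherian rings need not be noetherian; the real work is verifying that $\underline{B}$ is finitely generated as an algebra over $\underline{B}_0$, which requires that the semi-graded generators $x_i$ of $B$ actually give rise to \emph{homogeneous} morphisms $B \to B(\deg x_i)$, i.e.\ lie in $\widetilde{B}_{\deg x_i}$ rather than merely in $B_{\deg x_i}$. An analogous issue recurs in (v)(c)--(d), where the $\mathcal{X}_1$-hypothesis is precisely what is needed to control the high-degree discrepancy between $\underline{B}$ and $\Gamma(\pi(B))_{\geq 0}$.
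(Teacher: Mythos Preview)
Your treatment of (i)--(iii), (iv)(a)(b)(d) matches the paper's: the ring and module structures are defined by $f\star g:=s^{d}(g)\circ f$, and the evaluation $f\mapsto f(1)$ embeds each $(\underline{B})_d$ into $B_d$, giving (a), (b), (d) as you outline.

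There is, however, a genuine gap at (iv)(c). Your plan is to lift the generators $x_1,\dots,x_n$ of Definition~\ref{definition17.5.4}(ii) to elements of $\widetilde{B}_{\deg x_i}$, i.e.\ to verify $B_n x_i\subseteq B_{n+\deg x_i}$. But this is precisely what is \emph{not} available in the semi-graded setting: for instance in $\mathcal{U}(\mathcal{G})$ one has $x_jx_i=x_ix_j+[x_i,x_j]$ with $[x_i,x_j]\in B_1$, so right multiplication by $x_i$ does not send $B_1$ into $B_2$. Thus $x_i\notin\widetilde{B}_1$ in general, and there is no reason for $\underline{B}$ to be a finitely generated $B_0$-algebra. (Note also that (C1) only assumes $B$ is $SG$, not $FSG$, so the generators $x_i$ need not exist at all.) The paper avoids this entirely by adapting the Artin--Zhang argument: given a graded left ideal $I\subseteq\underline{B}$, each homogeneous $f\in I$ of degree $d_f$ yields a map $s^{-d_f}(B)\to B$; by noetherianity of $B$ one chooses a finite set $F_0$ maximizing the image $N\subseteq B$, and then sandwiches $I$ between the image $N'$ of $\bigoplus_{f\in F_0}s^{-d_f}(\underline{B})\to\underline{B}$ and $N'':=\Gamma(N)_{\geq 0}$. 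The quotient $N''/N'$ is right bounded, hence by (b) finitely generated over the noetherian ring $(\underline{B})_0$, so $I$ is finitely generated.

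Your plan for (v) also misallocates hypotheses. Parts (v)(a)--(c) and (e) assume only that $B$ is a domain, not that $\underline{B}$ satisfies $\mathcal{X}_1$; you cannot invoke $\mathcal{X}_1$ to control the cokernel of $\rho:\underline{B}\to\Gamma(\pi(B))_{\geq 0}$ there. The paper instead uses the domain hypothesis to force $N'=0$ in the direct-limit description $\Gamma_d=\underrightarrow{\lim}\,Hom_{{\rm sgr}-B}(B_{s,n},B(d))$, and then exploits the fact that each $B_{s,n}$ is a direct summand of $B$ (Definition~\ref{definition16.5.30}(i)(c)) to extend any $f:B_{s,n}\to B(d)$ to $f':B\to B(d)$, proving that $\rho$ is surjective in every degree. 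Surjectivity of $\rho$ then transfers (C2) and (C3) directly, and (C1) for $\Gamma$ is proved by rerunning the Artin--Zhang argument inside ${\rm qsgr}-B$. Only (v)(d) uses $\mathcal{X}_1$, and there it is the $\mathcal{X}_1$ condition on $\underline{B}$ that is pushed forward along the surjection $\rho$.
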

\begin{proof}
(i) Since ${\rm qsgr}-B$ is an abelian category, $Hom_{{\rm qsgr}-B}(\pi(B),s^d(\pi(B)))$ is an
abelian group; the product in $\Gamma(\pi(B))_{\geq 0}$ is defined by distributive law and the
following rule:
\begin{center}
If $f\in Hom_{{\rm qsgr}-B}(\pi(B),s^n(\pi(B)))$ and $g\in Hom_{{\rm qsgr}-B}(\pi(B),s^m(\pi(B)))$,
then

$f\star g:=s^n(g)\circ f\in Hom_{{\rm qsgr}-B}(\pi(B),s^{m+n}(\pi(B)))$.
\end{center}
This product is associative: In fact, if $h\in Hom_{{\rm qsgr}-B}(\pi(B),s^{p}(\pi(B)))$, then
\begin{center}
$(f\star g)\star h=[s^n(g)\circ f]\star h=s^{m+n}(h)\circ s^n(g)\circ f=f\star (g\star h)$.
\end{center}
It is clear that the product is $\mathbb{N}$-graded and the unity of $\Gamma(\pi(B))_{\geq 0}$ is
$i_B$ taken in $d=0$ (observe that we have simplified the notation avoiding the bar notation for
the morphisms in the category ${\rm qsgr}-B$).

(ii) The proof of that $\underline{B}$ is a $\mathbb{N}$-graded ring is as in (i). For the second
assertion we can apply the quotient functor $\pi$ to define the function
\begin{align}\label{equation6.1}
\underline{B} & \xrightarrow{\rho} \Gamma(\pi(B))_{\geq 0} \\
(f_{0},\dots, f_{d}, 0, \dots) & \mapsto (\pi(f_{0}),\dots, \pi(f_{d}), 0, \dots) \notag
\end{align}
which is a ring homomorphism since $\pi$ is additive ($\pi$ is exact) and $s\pi=\pi s$.

(iii) The proof of both assertions are as in (i), we only illustrate the product in the first case:
\begin{center}
If $f\in Hom_{{\rm sgr}-B}(B,s^n(B))$ and $g\in Hom_{{\rm sgr}-B}(B,s^m(M))$, then

$f\star g:=s^n(g)\circ f\in Hom_{{\rm sgr}-B}(B,s^{m+n}(M))$.
\end{center}

(iv) (a) Note that $(\underline{B})_0=Hom_{{\rm sgr}-B}(B,B)$, and consider the function
\begin{center}
$B_0\xrightarrow{\alpha} Hom_{{\rm sgr}-B}(B,B)$, $\alpha(x)=\alpha_x$, $\alpha_x(b):=bx$, $x\in
B_0$, $b\in B$;
\end{center}
since $B_0\subset Z(B)$ this function is a ring homomorphism, moreover, bijective. Thus,
$(\underline{B})_0$ is a commutative noetherian ring, so $(\underline{B})_0$ satisfies (C2). In
addition, observe that the structure of $B_0$-module of $Hom_{{\rm sgr}-B}(B,B)$ induced by
$\alpha$ coincides with the structure defined in Proposition \ref{proposition16.5.40}.

(b) Note that the function $Hom_{{\rm sgr}-B}(B,B(d))\xrightarrow{\lambda}B_d$ defined by $f\mapsto
f(1)$ is an injective $B_0$-homomorphism. Since $B_0$ is noetherian and $B$ satisfies {\rm C3},
then $Hom_{{\rm sgr}-B}(B,B(d))$ is finitely generated over $B_0\cong (\underline{B})_0$.

For the second part, let $N$ be generated by  $x_1,\dots,x_r$, with $x_i\in N_{d_i}$, $1\leq i\leq
r$. Let $x\in N_d$, then there exist $f_1,\dots,f_r\in \underline{B}$ such that $x=f_1\cdot
x_1+\cdots+f_r\cdot x_r$, from this we can assume that $f_i\in (\underline{B})_{d-d_i}$; by the
just proved property (C3) for $\underline{B}$ we obtain that every $(\underline{B})_{d-d_i}$ is
finitely generated as $(\underline{B})_{0}$-module, this implies that $N_d$ is finitely generated
over $(\underline{B})_{0}$.

(c) By (ii), $\underline{B}$ is not only $SG$ but $\mathbb{N}$-graded.

$\underline{B}$ is left noetherian: We will adapt a proof given in \cite{Artin2}. Let $I$ be a
graded left ideal of $\underline{B}$; let $f\in \underline{B}$ be homogeneous of degree $d_f$, then
$f$ induces a morphism $s^{-d_f}(B)\xrightarrow{f_-} B$; thus, given a finite set $F$ of
homogeneous elements of $I$, let $P_F:=\bigoplus_{f\in F} s^{-d_f}(B)$, $f_F:=\sum_{f\in
F}f_-:P_F\rightarrow B$ and let $N_F:=Im(f_F)$. Since $B$ is left noetherian we can choose a finite
set $F_0$ such that $N_{F_0}$ is maximal among such images. Let $N:=N_{F_0}$ and $P:=P_{F_0}$; we
define $N'':=\Gamma(N)_{\geq 0}:=\bigoplus_{d=0}^\infty Hom_{{\rm sgr}-B}(B,s^d(N))$. According to
(iii), $N''$ is a $\mathbb{N}$-graded $\underline{B}$-module. Given any element $f\in I$
homogeneous of degree $d_f$ we have the morphism $f_-$, but since $N$ is maximal the image of this
morphism is included in $N$, and this implies that $f\in N''$, so $I\subseteq N''$. On the other
hand, given $f\in I$ homogeneous of degree $d_f$ the $\mathbb{N}$-graded
$\underline{B}$-homomorphism $s^{-d_f}(\underline{B})\xrightarrow{\textit{\textbf{f}}_-}
\underline{B}$ defined by $\textit{\textbf{f}}_-(h):=hf$ has his image in $I$. Therefore,
$N'\subseteq I$, where $N'$ is the image of the induced morphism $P''\to \underline{B}$, with
$P'':=\bigoplus_{f\in F_0} s^{-d_f}(\underline{B})$. Thus, we have $N'\subseteq N''$, where both
are $\mathbb{N}$-graded $\underline{B}$-modules, whence we have the $\mathbb{N}$-graded
$\underline{B}$-module $N''/N'$. If we prove that $N''/N'$ is noetherian, then since $I/N'\subseteq
N''/N'$ we get that $I/N'$ is also noetherian, whence, $I/N'$ is finitely generated; but $N'$ is a
finitely generated left ideal of $\underline{B}$, so $I$ is finitely generated.

$N''/N'$ is noetherian:  Note first that $N''/N'$ is a module over $(\underline{B})_0$; if we prove
that $N''/N'$ is noetherian over $(\underline{B})_0$, then it is also noetherian over
$\underline{B}$. According to (a), we only need to show that $N''/N'$ is finitely generated over
$(\underline{B})_0$. But this follows from (b) since $N''/N'$ is right bounded (i.e., there exists
$n\gg 0$ such that the homogeneous component of $N''/N'$ of degree $k\geq n$ is zero, see
\cite{Artin2}).

(d) If $B$ is a domain, then $\underline{B}$ is also a domain: Suppose there exist $f,g\neq 0$ in
$\underline{B}$ such that $f\star g=0$, let $f_n\neq 0$ and $g_m\neq 0$ the nonzero homogeneous
components of $f$ and $g$ of lowest degree, thus $f_n\in Hom_{{\rm sgr}-B}(B,s^n(B))$, $g_m\in
Hom_{{\rm sgr}-B}(B,s^m(B))$ and $0=f_n\star g_m=s^n(g_m)\circ f_n\in Hom_{{\rm
sgr}-B}(B,s^{m+n}(B))$; since $f_n\neq 0$ we have $f_n(1)\neq 0$, also $g_m(1)\neq 0$ and hence
$s^n(g_m)(1)\neq 0$, so $0=s^n(g_m)(f_n(1))=f_n(1)s^n(g_m)(1)$,  but this is impossible since $B$
is a domain.

(v) We set $\Gamma:= \Gamma(\pi(B))_{\geq 0}$. Then,

(a) $\Gamma$ satisfies (C2): We divide the proof of this statement in two steps.

\textit{Step 1}. Adapting the proof of Proposition 5.3.7 in \cite{Smith} we will show that
\begin{center}
$\Gamma_0=Hom_{{\rm qsgr}-B}(\pi(B),\pi(B))=\underrightarrow{\lim}Hom_{{\rm sgr}-B}(B_{s,n}, B)$,
\end{center}
where the direct limit is taken over the homomorphisms of abelian groups
\begin{center}
$Hom_{{\rm sgr}-B}(B_{s,n}, B)\to Hom_{{\rm sgr}-B}(B_{r,m}, B)$
\end{center}
induced by the inclusion homomorphism $B_{r,m}\to B_{s,n}$, with $r\geq s$ and $m\geq n$. Observe
that the collection of couples $(s,n)$ is a partially ordered directed set.

Note first that $Hom_{{\rm qsgr}-B}(\pi(B),\pi(B))=\underrightarrow{\lim}Hom_{{\rm sgr}-B}(M', B)$,
where the direct limit is taken over all $M'\subseteq B$ with $B/M'\in {\rm stor}-B$. In fact, we
know that
\begin{center}
$Hom_{\rm {qsgr}-B}(\pi(B),\pi(B))=\underrightarrow{\lim}Hom_{{\rm sgr}-B}(M', B/N')$,
\end{center}
where the direct limit is taken over all $(M',N')\in \mathcal{P}$, but since $B$ is a domain,
$N'=0$.

Now let $\overline{f}\in \underrightarrow{\lim}Hom_{\rm {sgr}-B}(M',B)$, so $f\in Hom_{\rm
{sgr}-B}(M',B)$ for some $M'\subseteq B$ such that $T(B/M')=B/M'$; since $B/M'$ is finitely
generated, we can reasoning as in the proof of Theorem \ref{theorem16.6.5} and find $s,n\geq 0$
such that $B_{s}^nB\subseteq M'$, i.e., $B_{s,n}\subseteq M'$. From this we get that
$\overline{f}=\overline{f'}$, where $\overline{f'}\in  Hom_{{\rm sgr}-B}(B_{s,n}, B)$, with
$f':=f\iota$ and $\iota:B_{s,n}\hookrightarrow M'$ the inclusion. Since $B/B_{s,n}$ is torsion we
obtain that $\underrightarrow{\lim}Hom_{{\rm sgr}-B}(M', B)=\underrightarrow{\lim}Hom_{{\rm
sgr}-B}(B_{s,n}, B)$.

\textit{Step 2}. Considering $s,n=0$ in the limit above we obtain a ring homomorphism
\begin{center}
$(\underline{B})_0=Hom_{{\rm sgr}-B}(B, B)\xrightarrow{\gamma} Hom_{{\rm
qsgr}-B}(\pi(B),\pi(B))=\Gamma_0$;
\end{center}
since $(\underline{B})_0$ is noetherian we can prove that $\gamma$ is surjective. Let
$\overline{f}\in \Gamma_0$ with $f\in Hom_{{\rm sgr}-B}(B_{s,n},B)$, consider  the commutative
triangles
\[
\begin{diagram}
\node{} \node{B} \node{}\\
\node{B_{s,n}} \arrow{ne,r}{f} \arrow[2]{e,b}{\iota} \node{} \node{B_{0,0}=B} \arrow{nw,r}{f'}
\end{diagram} \ \ \begin{diagram}
\node{} \node{B} \node{}\\
\node{B_{s,n}} \arrow{ne,r}{f} \arrow[2]{e,b}{i} \node{} \node{B_{s,n}} \arrow{nw,r}{f},
\end{diagram}
\]
where $f'$ is defined by $f'(x+l):=f(x)$, with $x\in B_{s,n}$, $l\in L$ and $B=B_{s,n}\oplus L$.
Thus, $i^*(f)=fi=f$ and $\iota^*(f')=f'\iota=f$, so $\overline{f}=\overline{f'}=\gamma(f')$.

From this we conclude that $\Gamma_0$ is a commutative noetherian ring, and hence, $\Gamma$
satisfies (C2).

(b) $\Gamma$ satisfies (C3): Since $\Gamma$ is graded, $\Gamma_d$ is a $\Gamma_0$-module for every
$d$, but by (a) we have a ring homomorphism $B_0\cong (\underline{B})_0\xrightarrow{\gamma}
\Gamma_0$, so  the idea is to prove that $\Gamma_d$ is finitely generated over $B_0$. For this we
will show that there exists a surjective $B_0$-homomorphism
$(\underline{B})_d\xrightarrow{\beta}\Gamma_d$. Note that $\Gamma_d=Hom_{{\rm
qsgr}-B}(\pi(B),\pi(B(d)))= \underrightarrow{\lim}Hom_{{\rm sgr}-B}(B_{s,n}, B(d))$ (the proof of
this is as the step 1 in (a)); let $f\in (\underline{B})_d=Hom_{{\rm sgr}-B}(B, B(d))$, we define
$\beta(f):=\overline{f\iota}$, where $\iota:B_{s,n}\to B=B_{0,0}$; we can repeat the proof of the
step 2 in (a) and conclude that $\beta$ is a surjective $B_0$-homomorphism.

Additionally, let $N$ be a finitely generated graded $\Gamma$-module, says $N$ generated by a
finite set of homogeneous elements $x_1,\dots,x_r$, with $x_i\in N_{d_i}$, $1\leq i\leq r$. Let
$x\in N_d$, then there exist $f_1,\dots,f_r\in \Gamma$ such that $x=f_1\cdot x_1+\cdots+f_r\cdot
x_r$, from this we can assume that $f_i\in \Gamma_{d-d_i}$, but as was observed before, every
$\Gamma_{d-d_i}$ is finitely generated as $\Gamma_{0}$-module, so $N_d$ is finitely generated over
$\Gamma_{0}$ for every $d$.

(c) $\Gamma$ satisfies (C1): By (iii), $\Gamma$ is not only $SG$ but $\mathbb{N}$-graded.

$\Gamma$ is left noetherian: We will adapt the proof of (iv)-(c). Let $I$ be a graded left ideal of
$\Gamma$; let $f\in \Gamma$ be homogeneous of degree $d_f$, then $f$ induces a morphism
$s^{-d_f}(\pi(B))\xrightarrow{f_-} \pi(B)$; thus, given a finite set $F$ of homogeneous elements of
$I$, let $P_F:=\bigoplus_{f\in F} s^{-d_f}(\pi(B))$, $f_F:=\sum_{f\in F}f_-:P_F\rightarrow \pi(B)$
and let $N_F:=Im(f_F)$. Since $\pi(B)$ is a noetherian object of ${\rm qsgr}-B$ we can choose a
finite set $F_0$ such that $N_{F_0}$ is maximal among such images. Let $\pi(N):=N_{F_0}$ and
$\pi(P):=P_{F_0}$; we define $N'':=\Gamma(\pi(N))_{\geq 0}:=\bigoplus_{d=0}^\infty Hom_{{\rm
qsgr}-B}(\pi(B),s^d(\pi(N)))$. According to (iii), $N''$ is a $\mathbb{N}$-graded $\Gamma$-module.
Given any element $f\in I$ homogeneous of degree $d_f$ we have the morphism $f_-$, but since $N$ is
maximal the image of this morphism is included in $N$, and this implies that $f\in N''$, so
$I\subseteq N''$. On the other hand, given $f\in I$ homogeneous of degree $d_f$ the
$\mathbb{N}$-graded $\Gamma$-homomorphism $s^{-d_f}(\Gamma)\xrightarrow{\textit{\textbf{f}}_-}
\Gamma$ defined by $\textit{\textbf{f}}_-(h):=hf$ has his image in $I$. Therefore, $N'\subseteq I$,
where $N'$ is the image of the induced morphism $P''\to \Gamma$, with $P'':=\bigoplus_{f\in F_0}
s^{-d_f}(\Gamma)$. Thus, we have $N'\subseteq N''$, where both are $\mathbb{N}$-graded
$\Gamma$-modules, whence we have the $\mathbb{N}$-graded $\Gamma$-module $N''/N'$. If we prove that
$N''/N'$ is noetherian, then since $I/N'\subseteq N''/N'$ we get that $I/N'$ is also noetherian,
whence, $I/N'$ is finitely generated; but $N'$ is a finitely generated left ideal of $\Gamma$, so
$I$ is finitely generated.

$N''/N'$ is noetherian:  Note first that $N''/N'$ is a module over $\Gamma_0$; if we prove that
$N''/N'$ is noetherian over $\Gamma_0$, then it is also noetherian over $\Gamma$. According to (a),
we only need to show that $N''/N'$ is finitely generated over $\Gamma_0$. But this follows from (b)
since $N''/N'$ is right bounded.

(d) $\Gamma$ satisfies $\mathcal{X}_1$: Let $N$ be a finitely generated graded $\Gamma$-module, we
have $\underline{Ext}_{\Gamma}^j(\Gamma/\Gamma_{\geq 1},N)=
\underline{Ext}_{\Gamma}^j(\Gamma_0,N)$, so we must prove that
$\underline{Ext}_{\Gamma}^j(\Gamma_0,N)$ is finitely generated as $\Gamma_0$-module for $j=0,1$. By
the surjective homomorphism $(\underline{B})_0\xrightarrow{\gamma}\Gamma_0$ in the step 2 in (a),
it is enough to show that $\underline{Ext}_{\Gamma}^j(\Gamma_0,N)$ is finitely generated over
$(\underline{B})_0$. Observe that $\gamma$ is also a graded homomorphism of left
$(\underline{B})$-modules; moreover, $N$ is a finitely generated graded left
$(\underline{B})$-module since the homomorphism $\rho$ in (ii) is surjective; the proof of this
last statement is as in the step 2 of (a), using of course that $B$ is a domain, we include it for
completeness: It is enough to consider $\overline{f}\in \Gamma_d= \underrightarrow{\lim}Hom_{{\rm
sgr}-B}(B_{s,n}, B(d))$, with $f\in Hom_{{\rm sgr}-B}(B_{s,n}, B(d))$ for some $s,n\geq 0$; we
define $f':B_{0,0}\to B(d)$, $f'(x):=f(y)$, where $B=B_{s,n}\oplus L$ and $x=y+l$ with $y\in
B_{s,n}$ and $l\in L$; therefore, $\rho(f')=\pi(f')=\overline{f}$ since we have $f'\iota=f$.

Now we can apply the functor $\underline{Ext}_{\underline{B}}^j(\, ,N)$ and get the injective
homomorphism of left $(\underline{B})_0$-modules $\underline{Ext}_{\underline{B}}^j(\Gamma_0,N)\to
\underline{Ext}_{\underline{B}}^j((\underline{B})_0,N)$, but since $\underline{B}$ satisfies
$\mathcal{X}_1$, $\underline{Ext}_{\underline{B}}^j((\underline{B})_0,N)$ is finitely generated
over $(\underline{B})_0$, so $\underline{Ext}_{\underline{B}}^j(\Gamma_0,N)$ is finitely generated
since $(\underline{B})_0$ is left noetherian. From the injective $(\underline{B})_0$-homomorphism
$\underline{Ext}_{\Gamma}^j(\Gamma_0,N)\to \underline{Ext}_{\underline{B}}^j(\Gamma_0,N)$ we
conclude that $\underline{Ext}_{\Gamma}^j(\Gamma_0,N)$ is also finitely generated over
$(\underline{B})_0$.

(e) $\Gamma$ is a domain: Suppose there exist $f,g\neq 0$ in $\Gamma$ such that $f\star g=0$, let
$f_n\neq 0$ and $g_m\neq 0$ the nonzero homogeneous components of $f$ and $g$ of lowest degree,
thus
\begin{center}
$f_n\in Hom_{{\rm qsgr}-B}(\pi(B),s^n(\pi(B)))$, $g_m\in Hom_{{\rm qsgr}-B}(\pi(B),s^m(\pi(B)))$
\end{center}
and $0=f_n\star g_m=s^n(g_m)\circ f_n\in Hom_{{\rm qsgr}-B}(\pi(B),s^{m+n}(\pi(B)))$; note that the
representative elements of $f_n$ and $g_m$ in $Hom_{{\rm sgr}-B}(B_{0,0},B(n))\cong Hom_{{\rm
sgr}-B}(B,B)=(\underline{B})_0$ and $Hom_{{\rm sgr}-B}(B_{0,0},B(m))\cong Hom_{{\rm
sgr}-B}(B,B)=(\underline{B})_0$, respectively, are non zero, but this is impossible since
$(\underline{B})_0$ is a domain and the representative element of $f\star g$ in $Hom_{{\rm
sgr}-B}(B_{0,0},B(n+m))\cong Hom_{{\rm sgr}-B}(B,B)=(\underline{B})_0$ is zero.
\end{proof}

\begin{proposition}\label{proposition6.12}
Let $S$ be a commutative noetherian ring and $\rho:C\to D$ be a homomorphism of $\mathbb{N}$-graded
left noetherian $S$-algebras. If the kernel and cokernel of $\rho$ are right bounded, then
$D\underline{\otimes}\,_C\, -$ defines an equivalence of categories ${\rm qgr}-C\simeq {\rm
qgr}-D$, where $\underline{\otimes}$ denotes the graded tensor product.
\end{proposition}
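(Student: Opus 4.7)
The plan is to exhibit quasi-inverse functors between $\mathrm{qgr}\text{-}C$ and $\mathrm{qgr}\text{-}D$ using the adjoint pair (extension of scalars, restriction of scalars) and to check that both the unit and counit of this adjunction become isomorphisms after passing to the quotient categories. Throughout, write $F=D\,\underline{\otimes}_C\,-\colon \mathrm{gr}\text{-}C\to \mathrm{gr}\text{-}D$ for the graded extension of scalars along $\rho$, and $G\colon \mathrm{gr}\text{-}D\to \mathrm{gr}\text{-}C$ for restriction of scalars along $\rho$. Since $C,D$ are $\mathbb{N}$-graded and left noetherian, the torsion subcategory in $\mathrm{qgr}$ can (after suitable checking) be identified with the right-bounded finitely generated modules, so I will freely use ``torsion $\Leftrightarrow$ right bounded'' for finitely generated graded modules.

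First I would record the short exact sequence of graded $C$-$C$-bimodules
\begin{equation*}
0\to K\to C\xrightarrow{\rho} D\to L\to 0,
\end{equation*}
where $K:=\ker\rho$ and $L:=\mathrm{coker}\,\rho$ are right bounded by hypothesis. I would then verify that $F$ and $G$ descend to quotient categories. For $F$, a finitely generated graded right-bounded $C$-module $M$ tensored with $D$ produces a graded module that is still right bounded (any element of $D\otimes_C M$ of degree $n$ is a sum of pure tensors $d\otimes m$ with $\deg d+\deg m=n$, and since $M$ is bounded below and right bounded, the total degree must lie in a finite range, giving $(D\otimes_C M)_n=0$ for $n\gg 0$). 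For $G$, if $N$ is a torsion object of $\mathrm{qgr}\text{-}D$, then $G(N)$ is the same abelian group with the $C$-action via $\rho$; any right-bounded $D$-module is right bounded as $C$-module.

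Next I would show that the unit $\eta_M\colon M\to G F M = D\otimes_C M$ and the counit $\varepsilon_N\colon F G N = D\otimes_C N\to N$ become isomorphisms in the respective quotient categories. Tensoring the displayed sequence with a finitely generated $M$ over $C$, and splitting it into $0\to K\to C\to\mathrm{Im}\,\rho\to 0$ and $0\to\mathrm{Im}\,\rho\to D\to L\to 0$, gives an exact sequence whose outer terms involve $K\otimes_C M$, $L\otimes_C M$, $\mathrm{Tor}_1^C(\mathrm{Im}\,\rho,M)$ and $\mathrm{Tor}_1^C(L,M)$. The crucial observation is that if $X$ is a right-bounded graded $C$-module and $M$ is finitely generated (hence bounded below, with a resolution by finitely generated graded projectives that are also bounded below), then each $\mathrm{Tor}_i^C(X,M)$ is right bounded, since the graded pieces of $X\otimes_C P_i$ vanish in high degrees. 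Thus $\ker\eta_M$ and $\mathrm{coker}\,\eta_M$ are right bounded, hence torsion, so $\pi(\eta_M)$ is an isomorphism in $\mathrm{qgr}\text{-}C$. A symmetric argument using the factorization $\varepsilon_N\colon D\otimes_C N\to D\otimes_D N\to N$ through the multiplication map $m\colon D\otimes_C D\to D$ (whose kernel and cokernel are controlled by $K$ and $L$ via the same bimodule sequence) shows $\pi(\varepsilon_N)$ is an isomorphism in $\mathrm{qgr}\text{-}D$.

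Finally, the naturality of $\eta$ and $\varepsilon$ transports to the quotient categories, so the induced functors $\overline{F}\colon \mathrm{qgr}\text{-}C\to \mathrm{qgr}\text{-}D$ and $\overline{G}\colon \mathrm{qgr}\text{-}D\to \mathrm{qgr}\text{-}C$ are quasi-inverse. The main obstacle, as in the classical Artin--Zhang setting, is the bookkeeping around $\mathrm{Tor}$: one must ensure that the higher Tor groups $\mathrm{Tor}_i^C(X,M)$ for $X$ right bounded and $M$ finitely generated are genuinely right bounded. This rests on $C$ being left noetherian (so finitely generated modules admit resolutions by finitely generated graded projectives), on the $\mathbb{N}$-grading (ensuring finitely generated graded modules are bounded below), and on the commutative noetherian coefficient ring $S$ ensuring the tensor products of graded pieces remain tractable. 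Once these finiteness checks are in place, the equivalence follows formally from the adjunction together with the isomorphism of unit and counit in the quotient categories.
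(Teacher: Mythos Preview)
Your plan is precisely the Artin--Zhang argument that the paper invokes by citation (the paper's entire proof is ``the proof of Proposition 2.5 in \cite{Artin2} applies''), so the approaches coincide; you have simply unpacked what the paper leaves to the reference.

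One point deserves more care than you give it. You assert that restriction of scalars $G$ lands in $\mathrm{gr}\text{-}C$, i.e.\ that a finitely generated graded $D$-module remains finitely generated over $C$. This is not automatic from the stated hypotheses: right boundedness of $L=\mathrm{coker}\,\rho$ gives $D_n=\rho(C_n)$ for $n\gg 0$, so $D=\rho(C)+D_{<n_0}$, but you still need the finitely many low-degree pieces $D_0,\dots,D_{n_0-1}$ to be finitely generated over $C_0$ (equivalently over $S$). In Artin--Zhang this is guaranteed because their algebras are locally finite over a field; in the present paper it is secured by condition~(C3) for the algebras to which the proposition is actually applied (see Lemma~\ref{lemma6.11}). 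Once that local finiteness is in hand, your argument that $N=\rho(C)\cdot\{n_i\}+D_{<n_0}\cdot\{n_i\}$ is finitely generated over $C$ goes through, and the rest of your Tor bookkeeping is correct.
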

\begin{proof}
The proof of Proposition 2.5 in \cite{Artin2} applies since it is independent of the notion of
torsion.
\end{proof}

We are prepared for proving the main theorem of the present section.

\begin{theorem}\label{theorem16.7.13}
If $B$ is a domain that satisfies {\rm (C1)-(C4)} and $\underline{B}$ satisfies the condition
$\mathcal{X}_1$ then there exists an equivalence of categories
\begin{center}
${\rm qgr}-\underline{B}\simeq{\rm qgr}-\Gamma(\pi(B))_{\geq 0}$.
\end{center}
\end{theorem}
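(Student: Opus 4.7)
The plan is to apply Proposition \ref{proposition6.12} to the ring homomorphism $\rho: \underline{B} \to \Gamma(\pi(B))_{\geq 0}$ constructed in equation (\ref{equation6.1}) of Lemma \ref{lemma6.11}(ii). Taking $S := B_0$, which is commutative and noetherian by conditions (C2) and (C4), both $\underline{B}$ and $\Gamma(\pi(B))_{\geq 0}$ qualify as $\mathbb{N}$-graded left noetherian $S$-algebras by parts (i), (ii), (iv)(a)(c), and (v)(a)(c) of Lemma \ref{lemma6.11}. The theorem then reduces to verifying that the kernel and cokernel of $\rho$ are right bounded as graded modules.

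For the \textbf{kernel}: since $B$ is a domain, every shift $B(d)$ is torsion-free in the sense of Definition \ref{definition16.5.30}(ii). Indeed, if $B_{\geq s}^{n} x = 0$ for some nonzero $x \in B(d)$, taking any nonzero $b \in B_{\geq s}$ would give $b^{n} x = 0$, contradicting the domain property. Consequently, any $f \in (\underline{B})_d = Hom_{{\rm sgr}-B}(B, B(d))$ with $\pi(f) = 0$ must have image inside a torsion submodule of $B(d)$ and therefore vanishes. Hence $\rho$ is injective and $\ker(\rho) = 0$, which is trivially right bounded.

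For the \textbf{cokernel}: any class $\overline{f} \in \Gamma_d$ is represented by a morphism $f: B_{s,n} \to B(d)$ in ${\rm sgr}-B$ for appropriate $s,n \geq 0$ (the proof of Step 1 in Lemma \ref{lemma6.11}(v)(a) applies at grading $d$, with $N' = 0$ because $B$ is a domain). I would apply $\underline{Hom}_B(-, B)$ to the exact sequence $0 \to B_{s,n} \to B \to B/B_{s,n} \to 0$, whose connecting map lands in $\underline{Ext}_B^1(B/B_{s,n}, B)$. The obstruction to extending $f$ to $\tilde{f}: B \to B(d)$ lies in the degree-$d$ component of this Ext group, so right-boundedness of the cokernel reduces to showing that $\underline{Ext}_B^1(B/B_{s,n}, B)_d$ vanishes for $d \gg 0$. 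A d\'evissage of the torsion module $B/B_{s,n}$ along powers of $B_{\geq 1}$ exhibits successive subquotients that are modules over $B/B_{\geq 1}$, reducing the question to Ext groups controlled by the s-$\chi_1$ hypothesis on $\underline{B}$. Combined with the finite-generation statements from Lemma \ref{lemma6.11}(iv)(b) and (v)(b), one obtains the needed right-boundedness.

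Having established both bounds, Proposition \ref{proposition6.12} yields the equivalence ${\rm qgr}-\underline{B} \simeq {\rm qgr}-\Gamma(\pi(B))_{\geq 0}$. The main obstacle is the cokernel step: transferring the $\mathcal{X}_1$ condition, which is stated in the category of graded $\underline{B}$-modules, into a vanishing statement for the graded $B$-module $\underline{Ext}_B^1(B/B_{s,n}, B)$ in high degrees. The d\'evissage bookkeeping---and the careful use of the graded $\underline{B}$-module structure on $\Gamma(\pi(B))_{\geq 0}$ inherited through $\rho$---is where the semi-graded setting demands extra care compared to the classical Artin--Zhang--Verevkin argument.
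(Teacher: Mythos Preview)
Your overall strategy---apply Proposition~\ref{proposition6.12} to the homomorphism $\rho$ of Lemma~\ref{lemma6.11}(ii) with $S=B_0$---is exactly the paper's approach, and your verification that $\underline{B}$ and $\Gamma(\pi(B))_{\geq 0}$ are $\mathbb{N}$-graded left noetherian $B_0$-algebras via Lemma~\ref{lemma6.11} is correct. Your kernel argument is also fine and in fact sharper than the paper's, which simply cites Artin--Zhang for both kernel and cokernel.

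The gap is in your cokernel step. You propose to lift a representative $f:B_{s,n}\to B(d)$ to $\tilde f:B\to B(d)$ by controlling $\underline{Ext}_B^1(B/B_{s,n},B)$ in high degrees via d\'evissage and the s-$\chi_1$ hypothesis. As you yourself note, the hypothesis is $\chi_1$ for $\underline{B}$, i.e.\ a finiteness statement about $\underline{Ext}^j_{\underline{B}}((\underline{B})_0,N)$ for graded $\underline{B}$-modules $N$; transferring this to a vanishing statement for $\underline{Ext}^1_B(B/B_{s,n},B)$ in the \emph{semi}-graded category is not addressed, and there is no evident functor making that transfer automatic. Moreover, even if the Ext group were eventually zero for a fixed $(s,n)$, elements of $\Gamma_d$ are represented at varying $(s,n)$, so a uniform bound would still be needed.

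The point you are missing is that this obstacle does not arise: the paper has already shown, inside the proof of Lemma~\ref{lemma6.11}(v) (Step~2 of part~(a), the map $\beta$ in part~(b), and the explicit statement in part~(d)), that $\rho$ is \emph{surjective}. The mechanism is that $B_{s,n}$ is a direct summand of $B$ (Remark following Definition~\ref{definition16.5.30}), so any $f:B_{s,n}\to B(d)$ extends to $f':B\to B(d)$ by setting $f'(x+l):=f(x)$ on $B=B_{s,n}\oplus L$; together with the domain hypothesis (forcing $N'=0$ in the direct-limit description of $\Gamma_d$) this gives $\operatorname{coker}\rho=0$. Once you invoke this, the right-boundedness of both kernel and cokernel is immediate, and Proposition~\ref{proposition6.12} applies without any Ext bookkeeping.
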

\begin{proof}
Note that the ring homomorphism in (\ref{equation6.1}) satisfies the conditions of Proposition
\ref{proposition6.12}, with $S=B_0$, $C=\underline{B}$ and $D=\Gamma(\pi(B))_{\geq 0}$. In fact,
from Lemma \ref{lemma6.11} we know that $\underline{B}$ and $\Gamma(\pi(B))_{\geq 0}$ are
$\mathbb{N}$-graded left noetherian rings and $B_0$-modules; moreover, they are $B_0$-algebras: We
check this for $\underline{B}$, the proof for $\Gamma(\pi(B))_{\geq 0}$ is similar. If $f\in
Hom_{{\rm sgr}-B}(B,B(n)), g\in Hom_{{\rm sgr}-B}(B,B(m))$, $x\in B_0$ and $b\in B$, then
\begin{center}
$[x\cdot (f\star g)](b)=x\cdot (s^n(g)\circ f)(b)=xg(n)(f(b))$;

$[f\star(x\cdot g)](b)=[s^n(x\cdot g)\circ f](b)=(x\cdot g)(n)(f(b))=xg(n)(f(b))$.
\end{center}
Finally, we can apply the proof of part S10 in Theorem 4.5 in \cite{Artin2} to conclude that the
kernel and cokernel of $\rho$ are right bounded.
\end{proof}

\begin{remark}
Considering the above developed theory for graded rings and right modules instead of semi-graded
rings and left modules it is possible to prove that $\underline{B}\cong B$. Thus, in such case we
get from the previous theorem the Artin-Zhang-Verevkin equivalence ${\rm qgr}-B\simeq{\rm
qgr}-\Gamma(\pi(B))_{\geq 0}$.
\end{remark}

\begin{example}\label{example6.14}
The examples of skew $PBW$ extensions below are semi-graded (non $\mathbb{N}$-graded) domains and
satisfy the conditions (C1)-(C4); in each case we will prove that $\underline{B}$ satisfies the
condition $\mathcal{X}_1$; therefore, for these algebras Theorem \ref{theorem16.7.13} is true. In
every example $B_0=K$ is a field, we indicate the relations defining $B$ (see \cite{lezamareyes1})
and the associated graded ring $Gr(B)$ (Proposition \ref{proposition17.5.5}):

(i) Enveloping algebra of a Lie $K$-algebra $\mathcal{G}$ of dimension $n$, $\cU(\mathcal{G})$:
\begin{center}
$x_ik-kx_i=0$, $k\in K$;

$x_ix_j-x_jx_i=[x_i,x_j]\in \mathcal{G}=Kx_1+\cdots+Kx_n$, $1\leq i,j\leq n$;

$Gr(B)=K[x_1,\dots,x_n]$.
\end{center}

(ii) Quantum algebra $\mathcal{U}'(so(3,K))$, with $q\in K-\{0\}$:
\begin{center}
$x_2x_1-qx_1x_2=-q^{1/2}x_3,\ \ \ x_3x_1-q^{-1}x_1x_3=q^{-1/2}x_2,\ \ \
x_3x_2-qx_2x_3=-q^{1/2}x_1$;
\end{center}
in this case $Gr(B)=K_{\textbf{q}}[x_1,x_2,x_3]$ is the $3$-multiparametric quantum space, i.e., a
quantum polynomial ring in $3$ variables, with
\begin{center}
$\textbf{q}=\begin{bmatrix}1 & q & q^{-1}\\
q^{-1} & 1 & q\\
q & q^{-1} & 1\end{bmatrix}$.
\end{center}

(iii) Dispin algebra $\cU(osp(1,2))$:
\begin{center}
$x_2x_3-x_3x_2=x_3,\ \ \ x_3x_1+x_1x_3=x_2,\ \ \ x_1x_2-x_2x_1=x_1$;

$Gr(B)=K_{\textbf{q}}[x_1,x_2,x_3]$, with $\textbf{q}=\begin{bmatrix}1 & 1 & -1\\
1 & 1 & 1\\
-1 & 1 & 1\end{bmatrix}$.
\end{center}
(iv) Woronowicz algebra $\cW_{\nu}(\mathfrak{sl}(2,K))$, where $\nu \in K-\{0\}$ is not a root of
unity:
\begin{center}
$x_1x_3-\nu^4x_3x_1=(1+\nu^2)x_1,\ \ \ x_1x_2-\nu^2x_2x_1=\nu x_3,\ \ \
x_3x_2-\nu^4x_2x_3=(1+\nu^2)x_2$;

$Gr(B)=K_{\textbf{q}}[x_1,x_2,x_3]$, with $\textbf{q}=\begin{bmatrix}1 & \nu^{-2} & \nu^{-4}\\
\nu^{2} & 1 & \nu^{4}\\
\nu^{4} & \nu^{-4} & 1\end{bmatrix}$.
\end{center}

(v) Eight types of $3$-dimensional skew polynomial algebras, with $\alpha, \beta, \gamma \in
K-\{0\}$:
\begin{center}
$x_2x_3-x_3x_2=x_3$,\ \ $x_3x_1-\beta x_1x_3=x_2$,\ \ $x_1x_2-x_2x_1=x_1$;

$x_2x_3-x_3x_2=0,\ \ x_3x_1-\beta x_1x_3=x_2,\ \ x_1x_2-x_2x_1=0$;

$x_2x_3-x_3x_2=x_3,\ \ x_3x_1-\beta x_1x_3=0,\ \ x_1x_2-x_2x_1=x_1$;

$x_2x_3-x_3x_2=x_3,\ \ x_3x_1-\beta x_1x_3=0,\ \ x_1x_2-x_2x_1=0$;

$x_2x_3-x_3x_2=x_1,\ \ x_3x_1-x_1x_3=x_2,\ \ x_1x_2-x_2x_1 =x_3$;

$x_2x_3-x_3x_2=0,\ \ x_3x_1-x_1x_3=0,\ \ x_1x_2-x_2x_1=x_3$;

$x_2x_3-x_3x_2=-x_2,\ \ x_3x_1-x_1x_3=x_1+x_2,\ \ x_1x_2-x_2x_1=0$;

$x_2x_3-x_3x_2=x_3,\ \ x_3x_1-x_1x_3=x,\ \ x_1x_2-x_2x_1=0 $;

$Gr(B)=K_{\textbf{q}}[x_1,x_2,x_3]$, where $\textbf{q}$ is an appropiate matrix in every case.
\end{center}
Observe that in every example, $Gr(B)$ is a noetherian Artin-Schelter regular algebra, and hence,
$Gr(B)$ satisfies the $\mathcal{X}_1$ condition (see \cite{Rogalski}). From this we will conclude
that $\underline{B}$ also satisfies such condition.

In fact, note first that in general there is an injective $\mathbb{N}$-graded homomorphism of
$B_0$-algebras $\eta:\underline{B}\to Gr(B)$ defined by
\begin{align*}
\bigoplus_{d=0}^\infty Hom_{{\rm sgr}-B}(B,B(d))& \xrightarrow{\eta}\bigoplus_{d=0}^\infty
Gr(B)_d=\bigoplus_{d=0}^\infty
\frac{B_0\oplus\cdots\oplus B_d}{B_0\oplus\cdots\oplus B_{d-1}}\\
f_0+\cdots+f_d & \mapsto \overline{f_0(1)}+\cdots+\overline{f_d(1)},
\end{align*}
with $f_i\in Hom_{{\rm sgr}-B}(B,B(i))$, $0\leq i\leq d$. We only check that $\eta$ is
multiplicative, the other conditions can be proved also easily: $\eta(f_n\star
g_m)=\eta(s^n(g_m)\circ f_n)=\overline{(s^n(g_m)\circ
f_n)(1)}=\overline{s^n(g_m)(f_n(1))}=\overline{g_m(f_n(1))}=
\overline{f_n(1)g_m(1)}=\overline{f_n(1)}\ \overline{ g_m(1)}=\eta(f_n)\eta(g_m)$.

Thus, in the examples above $K=B_0$, $(\underline{B})_0\cong B_0\cong Gr(B)_0$ and the kernel and
cokernel of $\eta$ are right bounded, so we can apply the part (5) of Lemma 8.2 in \cite{Artin2}
and conclude that $\underline{B}$ satisfies $\mathcal{X}_1$.

We finish remarking that for the listed examples we can apply Proposition \ref{proposition6.12} and
Theorem \ref{theorem16.7.13} and obtain that
\begin{center}
${\rm qgr}-K[x_1,x_2,x_3]\simeq{\rm qgr}-\Gamma(\pi(B))_{\geq 0}$, with $B=\cU(\mathcal{G})$;

${\rm qgr}-K_{\textbf{q}}[x_1,x_2,x_3]\simeq{\rm qgr}-\Gamma(\pi(B))_{\geq 0}$,
\end{center}
with $B=\mathcal{U}'(so(3,K)), \cU(osp(1,2)), \cW_{\nu}(\mathfrak{sl}(2,K))$ or any of eight types
of $3$-dimensional skew polynomial algebras above, and $\textbf{q}$ an appropiate matrix in every
case.
\end{example}

\begin{center}
\textbf{Acknowledgements}
\end{center}
The authors are grateful to Efim Zelmanov, Ivan Shestakov, Iryna Kashuba, Daniel Rogalski, Andrea
Solotar, Mariano Suárez, Pablo Zadunaisky, Blas Torrecillas y Juan Cuadra for valuable comments and
suggestions.


\end{document}